\documentclass[12pt, a4paper]{article}
\usepackage{amsthm}
\usepackage{amsmath}
\usepackage{amssymb}
    \usepackage{bm}
\usepackage{fullpage}
\usepackage{enumerate}
\usepackage{mathrsfs}
\usepackage[all]{xy}
\usepackage[usenames,dvipsnames]{color}
\usepackage{graphicx}
\usepackage{bbm}
\usepackage[utf8]{inputenc}
\usepackage{enumitem}
\usepackage{multicol}
\usepackage[affil-it]{authblk}

\addtolength{\oddsidemargin}{-0in}



\newtheorem{theorem}{Theorem}[section]
\newtheorem{remark}{Remark}[section]
\newtheorem{corollary}{Corollary}[section]
\newtheorem{lemma}{Lemma}[section]
\newtheorem{claim}{Claim}[section]
\newtheorem{proposition}{Proposition}[section]
\newtheorem{example}{Example}[section]

\newtheorem{setting}{Setting}[section]
\newtheorem{lemmaA}{Lemma A.\hspace{-.1cm}}

\newtheorem{definition}{Definition}[section]


\newcommand{\R}{\mathbb{R}}

\newcommand{\N}{\mathbbm{N}}

\newcommand{\E}{\mathbbm{E}}

\newcommand{\Levy}{L\'{e}vy\text{ }}

\newcommand{\dP}{\mathbbm{P}}
\newcommand{\mcl}{\mathcal}
\newcommand{\LargerCdot}{\raisebox{-0.25ex}{\scalebox{1.2}{$\cdot$}}}
\providecommand{\keywords}[1]{\textbf{\textit{Keywords:}} #1}
\newcommand{\mypar}{\par\text{ }}
\newcommand{\Cov}{\mathrm{Cov}}
\newcommand{\ccdot}{\hspace{-.05cm}  \cdot \hspace{-.075cm}}
\newcommand\independent{\protect\mathpalette{\protect\independenT}{\perp}}
\def\independenT#1#2{\mathrel{\rlap{$#1#2$}\mkern2mu{#1#2}}}
\allowdisplaybreaks
\numberwithin{equation}{section}

\setlength\parindent{36pt}

\title{On the association and other forms of positive dependence for Feller processes}
\author{Eddie Tu%
 \thanks{\underline{Institution}: Dickinson College; \underline{Postal Address}:  Dickinson College, Department of Mathematics and Computer Science, PO Box 1773, Carlisle, PA 17013; \underline{Electronic address}: tue@dickinson.edu}}

\begin{document}

\maketitle

\begin{abstract}
We characterize various forms of positive dependence, such as association, positive supermodular association and dependence, and positive orthant dependence, for jump-Feller processes. Such jump processes can be studied through their state-space dependent \Levy measures. It is through these \Levy measures where we will provide our characterization. 
Finally, we present applications of these results to stochastically monotone Feller processes, including \Levy processes, the Ornstein-Uhlenbeck process, pseudo-Poisson processes, and subordinated Feller processes. 
\end{abstract}

\noindent\keywords{association, supermodular dependence, supermodular association, orthant dependence,  Feller processes, \Levy processes, integro-differential operators,  symbols}
\\ \\
\noindent
\textbf{\textit{2010 Mathematics Subject Classification:}} 60E07, 60E15, 60J25, 60J35, 60J75

\section{Introduction}

Multi-dimensional Feller processes have been useful for modeling the evolution of dynamical systems that are spatially inhomogeneous. These processes have been important  models in finance and physics \cite{Bottcher2010}. Of a particular interest is the study of the dependence between the marginal processes. Some different notions of positive dependence include association (A), positive supermodular association (PSA), positive supermodular dependence (PSD), and positive orthant dependence (POD). 
If a process exemplifies a certain notion of positive dependence between the marginals, then one can better study the evolution of the process.
\par

It is known that \Levy processes in $\R^d$ can be characterized by their characteristic triplet $(b, \Sigma, \nu)$, where $b\in\R^d$ is the non-random linear drift, $\Sigma$ is the covariance matrix of the (continuous) Brownian motion, and $\nu$ is the \Levy measure which characterizes the jump behavior of the process. Feller processes have behavior that is ``locally-L\'{e}vy," i.e.  for a Feller process $(X_t^x)_{t\geq0}$ that starts at point $x$ ($X_0^x = x$ a.s.), there exists a \Levy process $(Y_t)_{t\geq0}$ such that, in short-time, the behavior of $(X_t^x)_{t\geq0}$ can be approximated by the behavior of $(Y_t + x)_{t\geq0}$ \cite[p.46]{Schilling2013}. This idea is related to the notion that, if the domain $\mcl{D}(\mcl{A})$ is ``rich", i.e. contains $C_c^\infty(\R^d)$, the space of smooth functions with compact support, then the Feller process can be described by a characteristic triplet $(b(x), \Sigma(x), \nu(x,dy))$, where the function $b:\R^d\rightarrow\R^d$ represents non-random component, $\Sigma:\R^d\rightarrow\R^{d\times d}$ represents the continuous diffusion-like behavior, and $x\mapsto\nu(x,dy)$ is a measurable kernel representing the jump behavior of the process. Unlike the \Levy process, the Feller process' triplet has dependence on $x$, the state variable of the process, representing its spatial inhomogeneity. It is these triplets through which we will characterize the different notions of positive dependence.

\par

Association, the strongest form of positive dependence that we will examine, has been well-studied for infinitely divisible distributions. Infinitely divisible random vectors $X$ also have a characteristic triplet $(b, \Sigma, \nu)$ by the famous L\'{e}vy-Khintchine formula, where $b$ represents the non-random component, $\Sigma$ is covariance of the Gaussian component, and $\nu$ is the \Levy measure of the Poissonian component. Pitt (1982) characterized association for Gaussian distributions $(b,Q,0)$ under the condition that the entries $\Sigma_{ij}$ of $\Sigma$ are non-negative \cite{Pitt1982}. Resnick (1988) proved a sufficient condition for association of Poissonian distributions $(0,0,\nu)$ is that $\nu$ be concentrated on the positive and negative orthants $\R_+^d$ and $\R_-^d$ \cite{Resnick1988}, i.e.
\begin{equation}
\label{resnick}
\nu((\R_+^d \cup \R_-^d)^c) = 0.
\end{equation}
These results lead to the characterization of association between the marginal processes of a \Levy process, since, for a \Levy process $Y = (Y_t)_{t\geq0}$, $Y_t$ is infinitely divisible for each $t\geq0$, and the process can be described by its characteristic triplet $(b,\Sigma,\nu)$. Herbst and Pitt (1991) extended Pitt's result in \cite{Pitt1982} to Brownian motion with covariance matrix $\Sigma$ \cite{Herbst1991}. For jump-\Levy processes $Y\sim (0,0,\nu)$, Samorodnitsky (1995) showed that  condition (\ref{resnick}) is a sufficient and necessary condition for the association of each $Y_t$ \cite{Samorodnitsky1995}. This result was also  proven by Houdr\'{e} et. al. (1998) using a covariance identity \cite{Houdre1998}.   B\"{a}uerle et. al. (2008) extended Samorodnitsky's results for jump-\Levy processes to association in time, and showed that condition (\ref{resnick}) is also equivalent to PSD and POD \cite{Bauerle2008}. Liggett (1985) proved a necessary and sufficient condition for association of stochastically monotone Markov processes on compact state spaces based on the generator of the process \cite{Liggett1985}. Szekli (1995) and R\"{u}schendorf (2008) extended this result to more general state spaces \cite[Ch.3.7]{Szekli1995}, \cite[Cor.3.1]{Ruschendorf2008}. R\"{u}schendorf also extended the Liggett condition for PSA of the Markov process \cite[Cor.3.4]{Ruschendorf2008}. 

\par

In this paper, we want to characterize various forms of positive dependence for  stochastically monotone Feller process. Those forms of dependence include association, weak association (WA), PSA, PSD, POD, positive upper orthant dependence (PUOD), and positive lower orthant dependence (PLOD). The association of diffusion processes, i.e. $(b(x), \Sigma(x),0)$, has been characterized by Chen \cite{Chen1993}, so we will only focus on jump-Feller process $(b(x),0,\nu(x,dy))$. Association of jump-Feller processes, i.e. $(b(x), 0, \nu(x,dy))$, was given by Wang (2009) \cite[Thm.1.4]{Wang2009}, but under certain continuity and integrability conditions of the characteristic triplet (see Remark \ref{rem:jmwang}). Here, we will relax those conditions, allowing us to consider a larger class of Feller processes. Additionally, we characterize WA, PSA, PSD,  POD, PUOD, and PLOD for jump-Feller processes. Our techniques extend the ideas of Liggett, Szekli, and R\"{u}schendorf to the extended generator of the process, an integro-differential operator. We use ideas of the probabilistic symbol $p(x,\xi)$ of the process developed by Jacob and Schilling \cite[p.57-58]{Schilling2013}. Furthermore, for proving the necessary condition of association, WA, PSA, PSD, POD, PUOD,  PLOD, we use the technique of small-time asymptotics of the Feller process \cite{KuhnLaplace2016}, which will allow us to surpass the use of the (extended) generator and use solely the state-space dependent \Levy measure $\nu(x,dy)$. Finally, we provide examples of Feller processes satisfying the conditions of our main results.

\par

In a concurrent paper of ours, titled ``Association and other forms of positive dependence for Feller evolution systems" \cite{Tu2018b}, we  characterize dependence structures for Feller evolution processes (FEP), which are time-inhomogeneous Markov processes  having strongly continuous Markov evolutions and L\'{e}vy-type behavior. These FEPs are more general than the Feller processes (time-homogeneous) in this paper, but we need the results of this paper in order to characterize dependence structures of FEPs. We utilize B\"{o}ttcher's transformation of time-inhomogeneous FEPs into time-homogeneous Feller processes (see \cite{Bottcher2013})  and, in a non-trivial way, apply our results in this paper to prove characterizations of positive dependence for FEPs. This yields positive dependence characterizations for interesting time-inhomogeneous processes, like additive processes. For a more comprehensive overview of time-inhomogeneous Markov processes, we recommend the reader explore the paper by R\"{u}schendorf et. al. \cite{Ruschendorf2016}, which also discusses comparison theorems of time-inhomogeneous Markov processes.

\par

The present paper is organized in the following way. In Section \ref{sec:background}, we give some background on the positive dependence structures, association, WA, PSA, PSD,  POD, PUOD, and PLOD, along with definitions of various stochastic orderings. We also provide background on \Levy processes, Feller processes, and the different tools we use to analyze them. In Section \ref{sec:mainresults}, we state and prove our main results about the positive dependence structures of jump-Feller processes. Finally, in Section \ref{sec:examples}, we give a collection of interesting examples of multi-dimensional Feller processes to which we can apply these results.

\section{Background}
\label{sec:background}

\subsection{Dependence and stochastic orderings}

Let $X = (X_1,...,X_d)$ be a random vector in $\R^d$. We say $X$ is \textbf{positively correlated (PC)} if $\Cov(X_i,X_j)\geq0$ for all $i,j\in\{1,...,d\}$. This is one the weakest forms of positive dependence, and we are interested in stronger forms of positive dependence which will be of greater use in our study of stochastic processes. \textit{Association} is the strongest form of positive dependence that we will study. 

\begin{definition}
\label{def:assoc}
{\rm 
$X=(X_1,...,X_d)$ is \textbf{associated (A)} if we have $$\Cov(f(X), g(X)) \geq0,$$
for all $f,g:\R^d\rightarrow\R$ non-decreasing in each component, such that $\Cov(f(X),g(X))$ exists.
}
\end{definition}


We will also study other forms of positive dependence that are weaker than association, but stronger than positive correlation. We list them below.

\begin{definition}
\label{def:WA}
{\rm 
A random vector $X=(X_1,...,X_d)$ is \textbf{weakly associated (WA)} if, for any pair of disjoint subsets $I,J\subseteq\{1,..,d\}$, with $|I| = k$, $|J|=n$,
\begin{equation*}
\label{WA}
\Cov(f(X_I), g(X_J))\geq0,
\end{equation*}
where $X_I := (X_i:i\in I)$, $X_J := (X_j:j\in J)$, for any  $f:\R^k\rightarrow\R$, $g:\R^{n}\rightarrow\R$ non-decreasing, such that  $\Cov(f(X_I), g(X_J))$ exists.
}
\end{definition}


\begin{definition}
\label{def:psa}
{\rm 
$X$ is \textbf{positive supermodular associated (PSA)} if $\Cov(f(X), g(X))\geq0$ for all $f,g\in \mcl{F}_{ism} := \{h:\R^d\rightarrow\R, \text{ non-decreasing, supermodular}\}$. $f$
\textbf{supermodular} means, for all $x,y\in\R^d$, $f(x\wedge y) + f(x\vee y) \geq f(x) + f(y),$
where $x\wedge y$ is the component-wise minimum, and $x\vee y$ is the component-wise maximum. 
}
\end{definition}

Now let $\hat{X} = (\hat{X}_1,...,\hat{X}_d)$ be a random vector such that for all $i$, $\hat{X}_i \stackrel{d}= X_i$ and $\hat{X}_i$'s are mutually independent.

\begin{definition}
\label{def:psd}
{\rm 
$X$ is \textbf{positive supermodular dependent (PSD)} if, for all $f:\R^d\rightarrow\R$ supermodular, 
$\E f(\hat{X}) \leq \E f(X)$.
}
\end{definition}

\begin{definition}
\label{def:puod}
{\rm 
$X$ is \textbf{positive upper orthant dependent (PUOD)} if for all $t_1,...,t_d\in\R$,
\begin{align*}
\dP(X_1 >t_1,...,X_d >t_d) \geq \dP(X_1>t_1)...\dP(X_d>t_d).
\end{align*}
}
\end{definition}

\begin{definition}
\label{def:plod}
{\rm 
$X$ is \textbf{positive lower orthant dependent (PLOD)} if for all $t_1,...,t_d\in\R$,
\begin{align*}
\dP(X_1 \leq t_1,...,X_d \leq t_d) \geq \dP(X_1\leq t_1)...\dP(X_d\leq t_d).
\end{align*}
}
\end{definition}

\begin{definition}
\label{def:pod}
{\rm 
$X$ is \textbf{positive orthant dependent (POD)} if  $X$ is PUOD and PLOD. 

\par

One can also state another equivalent definition to PUOD (PLOD). For $i=1,...,d$, let $f_i:\R\rightarrow\R_+$ be non-decreasing (non-increasing) functions.  Then $X=(X_1,...,X_d)$ \textbf{PUOD} (\textbf{PLOD}) if and only if 
\begin{center}
$\E \left(\prod_{i=1}^d f_i (X_i) \right) \geq \prod_{i=1}^d \E f_i (X_i).$
\end{center}
}
\end{definition}

\noindent
\underline{Note}: Definition \ref{def:assoc} first appeared in Esary et. al. \cite{Esary1967},  Definition \ref{def:WA} in Burton et. al. \cite{Burton1986}, Definition \ref{def:psa} in R\"{u}schendorf \cite[p.284]{Ruschendorf2008}, Definition \ref{def:psd} in Hu \cite{Hu2000}, and Definitions \ref{def:puod}-\ref{def:pod}  in Lehmann \cite{Lehmann1966}. Definitions \ref{def:psd}-\ref{def:pod} can also be stated in terms of stochastic orderings. For more on this, we refer the reader to M\"{u}ller and Stoyan's book \cite[Ch.3]{Mueller2002}. It is useful to see the relationship between these different forms of positive dependence. We state the relationships in the following proposition.

\begin{proposition}
\label{propdepmap}
The  implications in Figure \ref{fig:prop2_1} hold.
\begin{center}
\begin{figure}[h]
  \centering
  \includegraphics[trim = {0cm 0cm 0cm 0cm}, scale=.75]{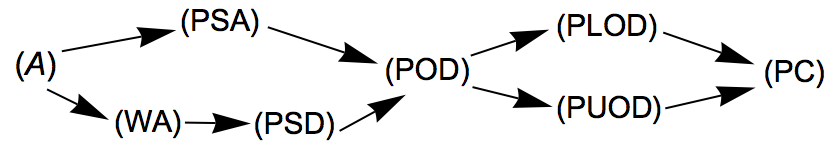}\\
  \caption{Implication map of various positive dependence structures}\label{fig:prop2_1}
\end{figure}
\end{center}
\vspace{-.6cm}



\end{proposition}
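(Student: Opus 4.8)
The figure is an implication map, and the plan is to split its edges into three groups and argue each separately. First I would dispatch the edges that are immediate from Definitions~\ref{def:assoc}--\ref{def:pod}. For $A\Rightarrow PSA$, observe that $\mcl{F}_{ism}$ is contained in the class of coordinatewise non-decreasing functions, so the covariance inequality of Definition~\ref{def:assoc} applies verbatim to every pair $f,g\in\mcl{F}_{ism}$. For $A\Rightarrow WA$, given disjoint $I,J$ and non-decreasing $f,g$ on the respective sub-blocks, extend them to $\tilde f,\tilde g:\R^d\to\R$ that simply ignore the unused coordinates; then $\tilde f,\tilde g$ are non-decreasing on $\R^d$ and $\Cov(f(X_I),g(X_J))=\Cov(\tilde f(X),\tilde g(X))\ge 0$. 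Finally $POD\Rightarrow PUOD$ and $POD\Rightarrow PLOD$ are the definition of $POD$.

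Next I would obtain the orthant edges. For $WA\Rightarrow POD$ I would peel off one coordinate at a time: writing $\dP(X_1>t_1,\dots,X_d>t_d)=\E(\mathbbm{1}_{\{X_1>t_1\}}\prod_{i\ge 2}\mathbbm{1}_{\{X_i>t_i\}})$ and applying $WA$ with $I=\{1\}$, $J=\{2,\dots,d\}$ gives $\dP(X>t)\ge\dP(X_1>t_1)\,\E\prod_{i\ge2}\mathbbm{1}_{\{X_i>t_i\}}$, and induction on the block finishes $PUOD$; the $PLOD$ inequality follows identically once one notes $\Cov(f,g)=\Cov(-f,-g)$, so $WA$ also forces non-negative covariance of non-increasing pairs. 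For $PSD\Rightarrow POD$ I would instead use the product characterization in Definition~\ref{def:pod}: for non-negative non-decreasing $f_1,\dots,f_d$ the map $x\mapsto\prod_i f_i(x_i)$ is supermodular, so
\[ \E\Big(\prod_{i=1}^d f_i(X_i)\Big)\ge\E\Big(\prod_{i=1}^d f_i(\hat X_i)\Big)=\prod_{i=1}^d\E f_i(X_i), \]
which is $PUOD$, and the non-increasing choice gives $PLOD$. The edges $PUOD\Rightarrow PC$ and $PLOD\Rightarrow PC$ then follow by fixing $i\ne j$, sending $t_\ell\to-\infty$ for $\ell\ne i,j$ to obtain the bivariate orthant inequality, and integrating it against Hoeffding's identity
\[ \Cov(X_i,X_j)=\int_\R\int_\R\big[\dP(X_i>s,X_j>t)-\dP(X_i>s)\dP(X_j>t)\big]\,ds\,dt. \]

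The hard part will be the edges that land on $PSD$, namely $WA\Rightarrow PSD$ and $PSA\Rightarrow PSD$. Here I would argue by a coordinate-by-coordinate coupling between $X$ and its independent version $\hat X$. Reducing to smooth supermodular $f$ by approximation, I would interpolate through hybrids $X^{(0)}=\hat X,\dots,X^{(d)}=X$, where $X^{(k)}$ carries the true joint law on coordinates $1,\dots,k$ and independent marginals on the rest; integrating the independent tail out of $f$ preserves supermodularity, so the $k$-th telescoping gap reduces to $\E\,\phi(X_1,\dots,X_{k-1},X_k)-\E\,\phi(X_1,\dots,X_{k-1},X_k')$ for a supermodular $\phi$ on $\R^k$ and an independent copy $X_k'$ of $X_k$ independent of $(X_1,\dots,X_{k-1})$. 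A fundamental-theorem-of-calculus/layer-cake expansion would then represent this gap as
\[ \int_\R\Cov\big(\partial_{x_k}\phi(X_1,\dots,X_{k-1},u),\,\mathbbm{1}_{\{X_k>u\}}\big)\,du, \]
and the increasing-differences property $\partial_{x_i}\partial_{x_k}\phi\ge 0$ (for $1\le i\le k-1$) forced by supermodularity makes $u\mapsto\partial_{x_k}\phi(\cdot,u)$ non-decreasing in the block coordinates, so each integrand is the covariance of a non-decreasing function of $X_{\{1,\dots,k-1\}}$ and a non-decreasing function of the disjoint coordinate $X_k$, hence non-negative by $WA$.

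The genuine obstacle is making this representation rigorous and matching it to the available hypothesis: approximating an arbitrary supermodular $f$ by smooth ones under the standing integrability, justifying the differentiation and Fubini, and confirming that integrating out the independent tail preserves supermodularity. The covariance produced at each stage is precisely a $WA$ inequality (a function of the block against a function of a single coordinate), which settles $WA\Rightarrow PSD$. For $PSA\Rightarrow PSD$ the same block factor $\partial_{x_k}\phi(\cdot,u)$ is non-decreasing but need not be supermodular, so $PSA$ does not close the step directly; I would instead extract $PUOD$ from $PSA$ by the induction of the preceding paragraph applied to the non-decreasing supermodular orthant indicators, and handle the remaining passage into $PSD$ through the supermodular-ordering reduction, in the spirit of R\"uschendorf \cite{Ruschendorf2008}.
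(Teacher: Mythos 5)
The paper does not actually prove these implications: its ``proof'' is a two-line pointer to M\"uller--Stoyan \cite{Mueller2002} for the general diagram and to \cite{Christofides2004} for the arrows involving PSD. So any self-contained argument is by construction a different route, and yours is essentially a reconstruction of the proofs those references contain. Your elementary edges are all correct: $\mathrm{A}\Rightarrow\mathrm{PSA}$ because $\mcl{F}_{ism}\subset\mcl{F}_i$; $\mathrm{A}\Rightarrow\mathrm{WA}$ by extending block functions by projection; $\mathrm{POD}\Rightarrow\mathrm{PUOD},\mathrm{PLOD}$ by definition; the coordinate-peeling induction for $\mathrm{WA}\Rightarrow\mathrm{POD}$ (with the sign flip $\Cov(f,g)=\Cov(-f,-g)$ for the lower orthants); supermodularity of $x\mapsto\prod_i f_i(x_i)$ for non-negative monotone $f_i$ for $\mathrm{PSD}\Rightarrow\mathrm{POD}$; and marginalization plus Hoeffding's identity for $\mathrm{PUOD}/\mathrm{PLOD}\Rightarrow\mathrm{PC}$ (with the implicit assumption that second moments exist, which is how PC is meant here). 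Your telescoping coupling with the covariance representation $\int_\R\Cov\bigl(\partial_{x_k}\phi(X_1,\dots,X_{k-1},u),\mathbbm{1}_{\{X_k>u\}}\bigr)\,du$ for $\mathrm{WA}\Rightarrow\mathrm{PSD}$ is precisely the Christofides--Vaggelatou argument, and the technical points you flag (smoothing, Fubini, stability of supermodularity under integrating out an independent block) are the right ones and are handled in \cite{Christofides2004}.

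The one genuine soft spot is the edge $\mathrm{PSA}\Rightarrow\mathrm{PSD}$. You correctly diagnose why the telescoping argument does not close under the PSA hypothesis (the block factor $\partial_{x_k}\phi(\cdot,u)$ is non-decreasing but not supermodular, so the covariance produced is not one that PSA controls), but your proposed repair --- extract PUOD from PSA and then ``handle the remaining passage into PSD'' --- cannot work as stated: POD does not imply PSD for $d\ge3$ (orthant probabilities do not determine general supermodular integrals), and indeed $\mathrm{PSD}\Rightarrow\mathrm{POD}$ is a strictly downward arrow in the very diagram you are proving. If that edge is present in the figure, you must either reproduce the actual argument from R\"uschendorf \cite{Ruschendorf2008} / \cite{Christofides2004} or simply cite it, which is all the paper itself does. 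Everything else stands.
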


\begin{proof}
Proofs for these implications can be found in M\"{u}ller and Stoyan's book \cite[Ch.3]{Mueller2002}, and implications involving PSD can be found in \cite{Christofides2004}.
\end{proof}

These notions of dependence can be extended from random vectors to stochastic processes. Let $X = (X_t)_{t\geq0}$ be a stochastic process in $\R^d$.

\begin{definition}
\label{def:assocspacetime}
{\rm 
(a) Process $X$ is \textbf{associated in space} or \textbf{spatially associated} if, for every $t\geq0$, the random vector $X_t = (X_t^{(1)},...,X_t^{(d)})$ is associated.

\par
  (b) Process $X$ is \textbf{associated in time} or \textbf{temporally associated} if, for all \\$0\leq t_1< ... <t_n$, the random vector $(X_{t_1},...,X_{t_n})$ in $\R^{dn}$ is associated.}
\end{definition}

\begin{remark}
{\rm
\begin{enumerate}[noitemsep, label = (\roman*)]
\item Clearly, (b) is a stronger than (a) in the above definition
\item We can define other forms of positive dependence in stochastic processes if we replace ``associated" in Definitions \ref{def:assocspacetime} (a) and (b) with ``WA," ``PSA," ``PSD," ``POD," ``PUOD," ``PLOD."
\item Definition (a) is equivalent to the statement that the ``process preserves positive correlations," as given in  \cite[p.80]{Liggett1985} and \cite{Chen1993}. 
\end{enumerate}
}
\end{remark}




\subsection{Feller processes, extended generators, small-time asymptotics}




\subsubsection{Feller process}

Consider a time-homogeneous Markov process $X = (X_t)_{t\geq0}$ on the space $(\Omega, \mcl{G}, (\mcl{G}_t)_{t\geq0}, \dP^x)_{x\in\R^d}$ on state space $\R^d$. $(\mcl{G}_t)_{t\geq0}$ is the filtration, and the index ``$x$"  indicates the starting point of the process: $\dP^x(X_0 = x) = 1$. We associate with a Markov process $X$ a positivity-preserving, contraction semigroup of bounded operators $(T_t)_{t\geq0}$ defined by 
\begin{center}
$T_t f(x) := \E^x f(X_t), \hspace{.3cm} x\in\R^d,$
\end{center}
 where $f\in B_b(\R^d)$, the space of bounded measurable functions on $\R^d$.
Let $(C_0(\R^d), ||\cdot||_\infty)$ be the Banach space of continuous functions that vanish at infinity, i.e. $\lim_{|x|\rightarrow\infty}f(x) = 0$, where $||\cdot||_\infty$ is the sup-norm. Define $\mcl{F}_i:=\{f:\R^d \rightarrow\R, \text{ non-decreasing in each component}\}$. A Markov process is \textbf{stochastically monotone} if $T_t f\in\mcl{F}_i$ for all $f\in \mcl{F}_i$.  We define the \textbf{generator} $\mcl{A}$ of the process $X$ to be 
\begin{equation}
\label{generator}
\mcl{A} f := \lim_{t\searrow0}\frac{T_t f - f}{t},
\end{equation}
for all $f\in\mcl{D}(\mcl{A})$, where $\mcl{D}(\mcl{A})$ is the \textbf{domain of the generator} defined to be $$\mcl{D}(\mcl{A}) = \{u\in C_0(\R^d): \text{ limit on RHS of (\ref{generator}) exists uniformly}\}.$$
The Markov process is a \textbf{Feller process} if the semigroup $(T_t)_{t\geq0}$ satisfies the following properties:
\begin{center}
(i) $T_t:C_0(\R^d)\rightarrow C_0(\R^d)$, \hspace{1cm}  (ii)  $\lim_{t\rightarrow0}||T_tu-u||_{\infty}=0$.
\end{center}


If additionally, the domain of the generator contains smooth functions with compact support, i.e. $\mcl{D}(\mcl{A}) \supset C_c^\infty(\R^d)$, we call the process $X$ a \textbf{rich Feller} process. It follows from Courr\`{e}ge's Theorem \cite{Courrege1965} that $-\mcl{A}$ becomes a pseudo-differential operator $p(x,D)$ on the space of $C_c^\infty(\R^d)$: $\mcl{A}|_{C_c^\infty(\R^d)} = -p(x,D) $, where $p(x,D)$ is defined to be 
\begin{equation} 
\label{pseudo} 
\mcl{A} f(x) = -p(x,D) f (x) = (2\pi)^{-d/2}  \int_{\R^d} e^{i \xi\cdot x} p(x,\xi) \hat{f}(\xi) d\xi, \hspace{.3cm} f\in C_c^\infty(\R^d).
\end{equation}
The function $-p(x,\cdot)$ is a continuous negative definite function, in the sense of Schoenberg, for all $x\in\R^d$, which yields a L\'{e}vy-Khintchine representation for each $x$:
\begin{equation}
\label{symbol}
-p(x,\xi) = - i b(x) \cdot \xi + \frac{1}{2} \xi \cdot \Sigma(x) \xi - \int_{\R^d\setminus\{0\}} (e^{i \xi\cdot y} - 1 - i \xi \cdot y\chi(y)) \nu(x,dy),
\end{equation}
\noindent where $\chi:\R^d\rightarrow\R$ is a \textbf{cut-off function}. In this paper, unless otherwise mentioned, we will assume $\chi(y)=\mathbbm{1}_{(0,1)}(|y|)$. For each $x$, $(b(x),\Sigma(x),\nu(x,dy))$ is the \textbf{(L\'{e}vy) characteristic triplet}, where $b(x)\in\R^d$, $\Sigma(x) \in \R^{d\times d}$ a symmetric positive definite matrix, and $\nu(x,dy)$, the \textbf{\Levy measure}, is a $\sigma$-finite measure on $\R^d\setminus\{0\}$ satisfying $\int_{\R^d\setminus\{0\}} (1\wedge |y|^2) \nu(x,dy)<\infty$. We call the function $p(x,\xi)$ the \textbf{symbol} of the process. We also write $X\sim (b(x),\Sigma(x), \nu(x,dy))$ to signify that $X$ is a Feller process with that characteristic triplet. 
\par
 When the symbol, and the corresponding  triplet, are constant in $x$, i.e. $p(x,\xi) = p(\xi)$ and triplet $(b(x),\Sigma(x),\nu(x,dy)) = (b,\Sigma,\nu)$ then  process $X$ is a \textbf{\Levy process}, i.e. a stochastically continuous Markov process with stationary and independent increments. The symbol $p(\xi)$ is also the \Levy symbol of the process, with characteristic function $\phi_{X_t} (\xi) = e^{tp(\xi)}$. In the \Levy case, $b$ is the non-random linear drift, $\Sigma$ is covariance of the Brownian motion, and $\nu$ is a measure representing the jumps of the process.
\par
Continuous negative definite functions $p(x,\xi)$ which are associated with a Feller process have  a form of local boundedness in the first argument. In other words, we say the symbol $p(x,\xi)$ is locally bounded if for all $K\subset\R^d$ compact, there exists $c_K>0$ such that 
\begin{equation}
\label{locbdd}
\sup_{x\in K} |p(x,\xi)| \leq c_K (1 + |\xi|^2).
\end{equation}
We say the \textbf{symbol is bounded} if (\ref{locbdd}) holds for $K=\R^d$. The local boundedness (or boundedness) of the symbol corresponds to the local boundedness (boundedness) of the characteristics $(b(x), \Sigma(x),\nu(x,dy))$ (See \cite[Lem.2.1]{Schilling1998}).

\subsubsection{Integro-differential operator}

For a general rich Feller process, the triplet $(b(x),\Sigma(x), \nu(x,dy))$  characterizes the behavior of the process, with $b(x)$ representing non-random continuous behavior, $\Sigma(x)$ representing the diffusion-like continuous behavior, and $\nu(x,dy)$ representing the jump behavior. To analyze the process one of the crucial tools we will use is the extended generator. For the case of rich Feller processes, when we substitute (\ref{symbol})  into  the right-hand side of (\ref{pseudo}), by elementary Fourier analysis, we get an \textbf{integro-differential operator} $I(p)$,
\begin{equation}
\label{integro}
I(p) f(x) = b(x) \ccdot \nabla f(x) + \frac{1}{2} \nabla\ccdot \Sigma(x)\nabla f(x) +\int_{y\neq0} \left(f(x+y)-f(x) - y\ccdot \nabla f(x) \chi(y) \right)\nu(x,dy)
\end{equation}
where $\nabla\cdot \Sigma(x)\nabla f(x) = \sum_{j,k=1}^d \Sigma_{jk}(x)\partial_j\partial_k f(x)$.
Clearly, the operator $I(p)$ is defined on $C_b^2(\R^d)$, the space of continuous twice-differentiable bounded functions. When the symbol $p(x,\xi)$ is bounded, $I(p)$ is an extension of $-p(x,D)$: 
\begin{center}
$I(p)|_{C_c^\infty(\R^d)}= -p(x,D) = \mcl{A}|_{C_c^\infty(\R^d)}$
\end{center}
and an extension of generator $\mcl{A}$:
$I(p)|_{\mcl{D}(\mcl{A})}= \mcl{A}$,
as shown by Schilling \cite[Lem.2.3]{Schilling1998}. Our interest in this integro-differential operator $I(p)$ comes with wanting to use the idea of Liggett's characterization of association via the generator.

\begin{theorem}[Liggett (1985) \cite{Liggett1985}, p.80]
\label{liggettthm}
Let $X = (X_t)_{t\geq0}$ be a Feller process on state space $E$ with generator $(\mcl{A},\mcl{D}(\mcl{A}))$ and semigroup $(T_t)_{t\geq0}$. If $X$ is stochastically monotone, 
then 
\begin{equation}
\label{liggett}
\mcl{A} fg \geq g\mcl{A} f + f\mcl{A} g, \hspace{.5cm} \forall f,g\in\mcl{F}_i\cap \mcl{D}(\mcl{A})
\end{equation}
if and only if $X_t \text{ is associated for all } t\geq0 \text{ wrt } \dP^x \text{ for all } x\in E.$
\end{theorem}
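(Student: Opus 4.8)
The plan is to translate the association of each $X_t$ into a multiplicative inequality for the semigroup and then relate that inequality to the generator by an interpolation argument. First I would record the covariance identity: under $\dP^x$ we have $\Cov(f(X_t),g(X_t)) = T_t(fg)(x) - (T_tf)(x)(T_tg)(x)$, so by Definition \ref{def:assoc} the vector $X_t$ is associated with respect to $\dP^x$ for every $x\in E$ if and only if
\[
T_t(fg) \geq (T_tf)(T_tg), \qquad \forall\, f,g\in\mcl{F}_i .
\]
The whole proof then reduces to showing the equivalence of this semigroup inequality (for all $t\geq0$) with the generator inequality (\ref{liggett}).

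For the implication (\ref{liggett}) $\Rightarrow$ association, I would fix $t>0$ and $f,g\in\mcl{F}_i\cap\mcl{D}(\mcl{A})$ and interpolate by setting $\psi(s) := T_s\big[(T_{t-s}f)(T_{t-s}g)\big]$ for $s\in[0,t]$. Then $\psi(0) = (T_tf)(T_tg)$ and $\psi(t) = T_t(fg)$, so it suffices to prove $\psi$ is non-decreasing. Differentiating, using $\tfrac{d}{dr}T_rh = T_r\mcl{A}h = \mcl{A}T_rh$ together with the product rule, and cancelling, I would obtain
\[
\psi'(s) = T_s\big[\mcl{A}(uv) - v\,\mcl{A}u - u\,\mcl{A}v\big], \qquad u:=T_{t-s}f,\ v:=T_{t-s}g .
\]
The crucial point is that stochastic monotonicity forces $u=T_{t-s}f$ and $v=T_{t-s}g$ to lie in $\mcl{F}_i$ (and in $\mcl{D}(\mcl{A})$), so the bracket is nonnegative by (\ref{liggett}); since $T_s$ is positivity-preserving, $\psi'(s)\geq0$, giving $\psi(0)\leq\psi(t)$ as desired. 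A final approximation step extends the inequality from $f,g\in\mcl{F}_i\cap\mcl{D}(\mcl{A})$ to all non-decreasing $f,g$.

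For the converse, assuming $T_t(fg)\geq (T_tf)(T_tg)$ for all $t\geq0$, I would take $f,g$ for which the relevant products lie in $\mcl{D}(\mcl{A})$, subtract $fg$ from both sides, divide by $t$, and split
\[
\frac{(T_tf)(T_tg)-fg}{t} = \frac{T_tf-f}{t}\,(T_tg) + f\,\frac{T_tg-g}{t} .
\]
Letting $t\searrow0$ and using $T_tg\to g$ together with the definition (\ref{generator}) yields $\mcl{A}(fg)\geq g\,\mcl{A}f + f\,\mcl{A}g$, which is (\ref{liggett}).

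The step I expect to be the main obstacle is the rigorous handling of domain and regularity issues, not the formal computation. In particular, the product $(T_{t-s}f)(T_{t-s}g)$ (and $fg$ in the converse) need not belong to $\mcl{D}(\mcl{A})$, so the application of $\mcl{A}$ to products must be justified; on a compact state space, where $C(E)$ is an algebra and constants lie in the domain, this is precisely where Liggett's argument is cleanest, and there the differentiability of $s\mapsto\psi(s)$ and the interchange of $\tfrac{d}{ds}$ with the strongly continuous semigroup can be verified. The remaining passage from domain elements to arbitrary bounded non-decreasing $f,g$ is then routine by monotone and bounded convergence once the semigroup inequality is established on a sufficiently rich class.
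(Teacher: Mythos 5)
Your proposal is correct, and it is worth noting that the paper does not actually prove Theorem \ref{liggettthm} itself (it cites Liggett, Szekli and R\"{u}schendorf); the closest in-paper argument is the proof of Theorem \ref{extliggett}, the analogue for the extended generator $I(p)$. Your route differs from that proof in the forward direction. The paper sets $F(t):=T_t(fg)-T_tf\,T_tg$, shows $F'(t)\geq I(p)F(t)$ using stochastic monotonicity and $\tfrac{d}{dt}T_th=I(p)T_th=T_tI(p)h$, and then invokes the Cauchy-problem (Duhamel) lemma to write $F(t)=\int_0^t T_{t-s}G(s)\,ds\geq0$ with $G\geq0$. You instead interpolate with $\psi(s):=T_s\bigl[(T_{t-s}f)(T_{t-s}g)\bigr]$ and show $\psi'\geq0$ directly; after the substitution $s\mapsto t-s$ one sees $T_sG(t-s)=\psi'(s)$, so the two computations are literally the same integrand organized differently. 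What each buys: your version avoids quoting the Cauchy-problem lemma as a black box but must instead justify differentiating $s\mapsto T_s h_s$ with an $s$-dependent argument (product rule for the semigroup derivative plus the interchange of $T_s$ with the limit), which is exactly the content that lemma packages; the paper's version isolates those analytic issues into Lemmas \ref{integrogenerates}--\ref{extcauchy} so that the main proof is a one-line positivity argument. Your converse direction is the same telescoping decomposition the paper uses. Two small points: membership of $T_{t-s}f$ in $\mcl{D}(\mcl{A})$ comes from semigroup invariance of the domain, not from stochastic monotonicity (monotonicity only gives $T_{t-s}f\in\mcl{F}_i$); and your closing caveat about $\mcl{D}(\mcl{A})\cap\mcl{F}_i$ being too small on non-compact state spaces is precisely the reason the paper replaces $\mcl{A}$ by $I(p)$ on $C_b^2(\R^d)$ in Theorem \ref{extliggett}, so the "routine approximation" step is genuinely unavailable for the generator itself outside the compact setting.
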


Liggett proved this for $E$ compact and $\mcl{A}$ bounded. This was extended by Szekli and R\"{u}schendorf to more general Polish spaces $E$ and $\mcl{A}$ unbounded \cite[Ch.3.7]{Szekli1995}, \cite[Cor.3.1]{Ruschendorf2008}. For the Feller processes we consider in the above setting, particularly those of the jump-variety, the domain $\mcl{D}(\mcl{A})$ is often defined to be a dense subspace of $C_0(\R^d)$, and thus, $\mcl{D}(\mcl{A})\cap \mcl{F}_i = \{f\equiv 0\}$. Hence, in that case,  inequality (\ref{liggett}) would always hold. Thus, we would like to extend Theorem \ref{liggettthm} to the extended generator $I(p)$. 

\subsubsection{Small-time asymptotics}

The (extended) generator gives us a connection between the notion of association and the \Levy characteristics $(b(x), \Sigma(x), \nu(x,dy))$ due to the representation of integro-differential operator. Thus, to characterize association for Feller processes using the \Levy characteristics, an extension of Theorem \ref{liggettthm} becomes quite useful. However, under weaker conditions of the symbol $p(x,\xi)$, such as local boundedness, it is useful to surpass the generator (as we will show in Section 3) and show a more direct connection between the \Levy characteristics and the notion of association. We will establish such a connection by looking at small-time asymptotics of a Feller process. Additionally, this notion will allow us to characterize weaker forms of positive dependence under the \Levy characteristics. 

\par
The classical results of small-time asymptotics have been primarily established for \Levy processes. For a given \Levy process $L = (L_t)_{t\geq0}$ it is known that for all $f\in C_c(\R^d\setminus\{0\})$, 
\begin{equation}
\label{levysmalltime}
\lim_{t\searrow0} \frac{1}{t} \E^0 f(L_t) = \int_{\R^d\setminus\{0\}} f(y) \nu(dy).
\end{equation}
(See \cite[p.2]{KuhnLaplace2016} for reference.) Thus, by the Portmanteau theorem, (\ref{levysmalltime}) implies  \begin{center}
$\displaystyle\lim_{t\searrow0}\frac{1}{t} \dP^0 (L_t \in A)=\nu(A)$
\end{center}
for all $A\in\mcl{B}(\R^d\setminus\{0\})$ with $0\notin \overline{A}$ and $\nu(\partial A)=0$. This result naturally extends to a general starting point $x$: For every $x\in\R^d$, $\lim_{t\searrow0}\frac{1}{t} \dP^x (L_t -x\in A)=\nu(A)$
by translation invariance of a \Levy process. Until recently, an analogous statement of the above for Feller processes was not known. However, K\"{u}hn and Schilling (2016) proved in \cite{KuhnLaplace2016} 
such a statement for such processes.

\begin{theorem}[K\"{u}hn, Schilling (2016) \cite{KuhnLaplace2016}, Cor.3.3]
\label{kuhn}
Let $X = (X_t)_{t\geq0}$ be a rich Feller process with symbol $p(x,\xi)$ and characteristics $(b(x), \Sigma(x), \nu(x,dy))$. If $f\in C_0(\R^d)$ and $f|_{B(0,\delta)} = 0$ for some $\delta>0$, then $$\lim_{t\searrow0} \frac{1}{t} \E^x f(X_t -x) = \int_{\R^d\setminus\{0\}} f(y) \nu(x,dy).$$
Additionally, by the Portmanteau theorem, 
\begin{equation*}
\label{fellersmalltime}
\lim_{t\searrow0}\frac{1}{t} \dP^x(X_t - x \in A) = \nu(x,A)
\end{equation*}
for all $A\in\mcl{B}(\R^d\setminus\{0\})$ such that $0\notin \overline{A}$ and $\nu(x,\partial A)=0$.
\end{theorem}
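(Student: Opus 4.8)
My plan is to prove the displayed limit first for $f\in C_c^\infty(\R^d)$ vanishing on a neighborhood of the origin, where it reduces to a one-line computation with the integro-differential operator $I(p)$, and then to bootstrap to arbitrary $f\in C_0(\R^d)$ with $f|_{B(0,\delta)}=0$ by a uniform-approximation argument; the set-wise (Portmanteau) statement I would read off at the end.

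For the smooth case I would fix the base point $x$ and absorb the shift into the test function: put $h(z):=f(z-x)$, so that $h\in C_c^\infty(\R^d)$, $h$ vanishes on $B(x,\delta)$, and $\E^x f(X_t-x)=\E^x h(X_t)=T_t h(x)$. Since $X$ is rich Feller, $h\in C_c^\infty(\R^d)\subset\mcl{D}(\mcl{A})$, so $\mcl{A}h\in C_0(\R^d)$ and the semigroup identity gives
$$\frac{1}{t}\big(T_th(x)-h(x)\big)=\frac{1}{t}\int_0^t T_s(\mcl{A}h)(x)\,ds.$$
Because $\mcl{A}h\in C_0(\R^d)$ and $(T_s)_{s\ge0}$ is strongly continuous, $T_s(\mcl{A}h)(x)\to\mcl{A}h(x)$ as $s\searrow0$, so the time average on the right converges to $\mcl{A}h(x)$; using $h(x)=f(0)=0$ on the left I obtain $\lim_{t\searrow0}\tfrac1t\E^x f(X_t-x)=\mcl{A}h(x)$.

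It then remains to evaluate $\mcl{A}h(x)$. On $C_c^\infty(\R^d)$ the generator coincides pointwise with the integro-differential operator of (\ref{integro}), $\mcl{A}h=-p(x,D)h=I(p)h$ (the second equality is the Fourier identity behind (\ref{integro}), valid for compactly supported $h$ because $-p(x,\cdot)$ obeys the quadratic bound at the fixed point $x$). Since $f$ is flat at $0$, $h$ is flat at $x$: $h(x)=0$, $\nabla h(x)=\nabla f(0)=0$, and $\partial_j\partial_k h(x)=0$. Hence the drift term $b(x)\ccdot\nabla h(x)$, the diffusion term $\tfrac12\nabla\ccdot\Sigma(x)\nabla h(x)$, and the compensator $y\ccdot\nabla h(x)\chi(y)$ all vanish, leaving
$$\mcl{A}h(x)=I(p)h(x)=\int_{y\neq0}\big(h(x+y)-h(x)\big)\,\nu(x,dy)=\int_{\R^d\setminus\{0\}}f(y)\,\nu(x,dy),$$
the integral being finite since $f$ is bounded with $f|_{B(0,\delta)}=0$ and $\nu(x,\{|y|\ge\delta\})<\infty$. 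This settles the smooth case.

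For general $f\in C_0(\R^d)$ with $f|_{B(0,\delta)}=0$ I would approximate uniformly by $f_n\in C_c^\infty(\R^d)$ with $f_n|_{B(0,\delta/2)}=0$; then $f-f_n$ vanishes on $B(0,\delta/2)$, whence
$$\frac{1}{t}\big|\E^x(f-f_n)(X_t-x)\big|\le\|f-f_n\|_\infty\,\frac{1}{t}\,\dP^x\big(|X_t-x|\ge\delta/2\big).$$
The main obstacle is to control the last factor uniformly for small $t$, and here I expect to invoke the standard small-time exit estimate for Feller processes (Schilling's maximal inequality), which yields $\limsup_{t\searrow0}\tfrac1t\dP^x(|X_t-x|\ge\delta/2)<\infty$ by local boundedness of the symbol; together with $|\int(f-f_n)\,d\nu(x,\cdot)|\le\|f-f_n\|_\infty\,\nu(x,\{|y|\ge\delta/2\})$ this lets me pass $n\to\infty$ and transfer the limit from $f_n$ to $f$. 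Finally, the set-wise statement follows by applying this function limit to suitable continuous test functions and invoking the Portmanteau theorem for the finite measures $\tfrac1t\dP^x(X_t-x\in\cdot)$ and $\nu(x,\cdot)$ on sets bounded away from the origin, exactly as the statement indicates.
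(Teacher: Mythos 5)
The paper does not actually prove this statement: it is quoted verbatim from K\"{u}hn--Schilling \cite{KuhnLaplace2016}, Cor.~3.3, and used as a black box, so there is no in-paper proof to compare against. Your argument, however, is a correct self-contained derivation and follows what is essentially the standard route. The smooth case is clean: for $h(\cdot)=f(\cdot-x)\in C_c^\infty(\R^d)\subset\mcl{D}(\mcl{A})$ the Dynkin/semigroup identity plus strong continuity gives $\tfrac1t(T_th(x)-h(x))\to\mcl{A}h(x)$, and since $f$ vanishes on a whole ball around $0$ (not just at the point), $h$ together with its first and second derivatives vanishes at $x$, so $I(p)h(x)$ collapses to $\int_{y\neq0}f(y)\,\nu(x,dy)$; the pointwise identity $\mcl{A}h(x)=I(p)h(x)$ needs only the L\'{e}vy--Khintchine representation of $p(x,\cdot)$ at the fixed $x$, so no global boundedness of the symbol is required, consistent with the remark after the theorem that local boundedness suffices. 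The two places where you lean on external facts are both legitimate: the approximation $f_n\to f$ uniformly with $f_n\in C_c^\infty$ vanishing on $B(0,\delta/2)$ exists by mollification plus cutoff, and the uniform-in-$t$ bound $\limsup_{t\searrow0}\tfrac1t\dP^x(|X_t-x|\geq\delta/2)<\infty$ is exactly Schilling's maximal inequality $\dP^x(\sup_{s\leq t}|X_s-x|\geq r)\leq c\,t\sup_{|y-x|\leq r}\sup_{|\xi|\leq1/r}|p(y,\xi)|$, whose right-hand side is finite by the automatic local boundedness of the symbol of a rich Feller process; this is the one genuinely nontrivial ingredient you import, and you should cite it explicitly if this were written up. The Portmanteau step is glossed but standard (one-sided bounds for closed sets bounded away from the origin and open sets via inner regularity, then combine using $\nu(x,\partial A)=0$). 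In short: correct proof, same spirit as the cited source, with the maximal inequality doing the work that the citation hides.
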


The small-time asymptotics given by Theorem \ref{kuhn} give us a direct connection between the \Levy measure and the Feller process, surpassing the representation of the generator. Also, notice that the result holds for more general, locally bounded symbols. 

\par

Our interest focuses on jump-Feller processes, i.e. $X\sim (b(x), 0, \nu(x,dy))$, since the association of diffusion processes $X\sim (b(x), \Sigma(x), 0)$ has been done by Mu-Fa Chen \cite{Chen1993}. In the following section, we will prove a sufficient and necessary condition for the jump-Feller process to be associated, WA, PSA, PSD, POD, PUOD, and PLOD in space, where the condition is 
\begin{equation}
\label{resnickx}
\nu(x, (\R_+^d \cup \R_-^d)^c) = 0, \hspace{1cm} \forall x\in\R^d.
\end{equation}

\begin{remark}
\label{rem:jmwang}
{\rm We do note that Jie Ming Wang \cite[Thm.1.4]{Wang2009} proved spatial association is equivalent to \eqref{resnickx} under certain continuity and integrability conditions (unknown to the author at the time). These assumptions include 
\begin{itemize}[itemsep=0em]
\item $b_i,\Sigma_{ij}\in C(\R^d)$, for all $i,j$.
\item $\int h_i(z) (\nu(\cdot,dz) - \nu(\cdot, d(-z)))\in C(\R^d)$, where $h:\R^d\rightarrow\R^d$ is defined by $h_i(z) = \text{sgn}(z_i)(1\wedge |z_i|)$.
\item $\int_A |h(z)|^2 \nu(\cdot, dz)\in C(\R^d)$ for all $A\in\mcl{B}(\R^d)$.
\item $\int g(z) \nu(\cdot, dz) \in C(\R^d)$ for all $g\in C_b(\R^d)$ that is 0 near the origin.
\end{itemize}
\noindent
We relax these conditions, and furthermore our work includes characterizations of the other dependence structures mentioned in Definitions \ref{def:assoc}-\ref{def:pod}.
}
\end{remark}

\section{Main results}
\label{sec:mainresults}

Consider a rich Feller process $X = (X_t)_{t\geq0}$ on the space $(\Omega, \mcl{G}, (\mcl{G}_t)_{t\geq0}, \dP^x)_{x\in\R^d}$ with \Levy \\characteristics $(b(x), 0, \nu(x,dy))$. If we assume that $X$ is stochastically monotone, then condition (\ref{resnickx}) is a necessary and sufficient condition for the association, WA, PSA, PSD, POD, PUOD, and PLOD in space of process $X$. These equivalences can be illustrated in the implication map in Figure \ref{fig:impmapres2}. The dashed arrows are the implications we will prove.


\begin{figure}[h]
  \centering
  \includegraphics[trim = {0cm 0cm 0cm 0cm}, scale=.65]{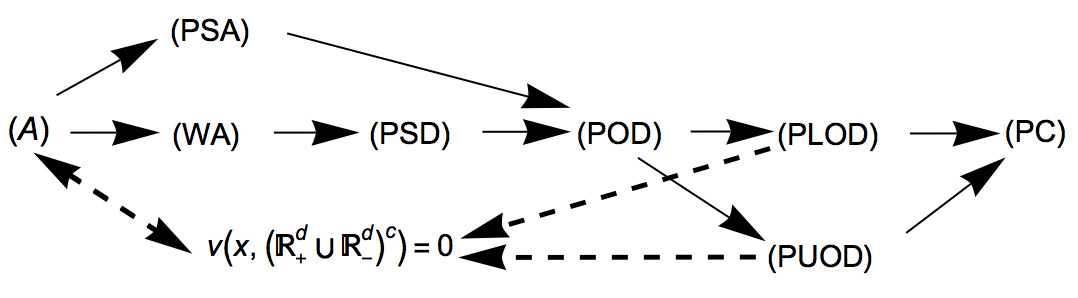}\\
  \caption{Equivalence of dependencies under condition \eqref{resnickx} for Feller processes}\label{fig:impmapres2}
\end{figure}

\par

To show these equivalences, we first give a proof that, under stochastic monotonicity, condition (\ref{resnickx}) is equivalent to association in space. We show this in Section 3.1. Then in Section 3.2, we show that PUOD in space (and, similarly, PLOD) implies  condition (\ref{resnickx}).

\subsection{Association is equivalent to  condition (\ref{resnickx})} 

\begin{theorem}
\label{resnickxthm}
Let $X=(X_t)_{t\geq0}$ be a rich Feller processes with stochastically monotone transition semigroup $(T_t)_{t\geq0}$, a generator $(\mcl{A},\mcl{D}(\mcl{A}))$, bounded symbol $p(x,\xi)$, and $(b(x), 0, \nu(x,dy))$. Then $X_t$ is associated for all $t\geq0$ if and only if condition (\ref{resnickx}): $\nu(x, (\R_+^d \cup \R_-^d)^c)=0$ is satisfied.
\end{theorem}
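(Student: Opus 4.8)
The whole argument will turn on a single algebraic identity for the integro-differential operator \eqref{integro}. The plan is to first record, for the pure-jump triplet $(b(x),0,\nu(x,dy))$ and any $f,g\in C_b^2(\R^d)$, the carr\'e-du-champ-type identity
\[
I(p)(fg)(x) - g(x)\,I(p)f(x) - f(x)\,I(p)g(x) = \int_{y\neq 0}\big(f(x+y)-f(x)\big)\big(g(x+y)-g(x)\big)\,\nu(x,dy),
\]
which I would verify by expanding $I(p)(fg)$: the drift terms and the $\chi$-truncation correction terms cancel against $g\,I(p)f+f\,I(p)g$, leaving exactly the product of increments under the integral. The right-hand side converges because $f,g\in C_b^2(\R^d)$ makes the integrand $O(|y|^2)$ near the origin and bounded away from it, hence $\nu(x,\cdot)$-integrable. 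The point is that for $f,g$ non-decreasing in each component the integrand is non-negative precisely on $\R_+^d\cup\R_-^d$ (if $y\geq 0$ both increments are $\geq 0$; if $y\leq 0$ both are $\leq 0$), while on the mixed-sign set $(\R_+^d\cup\R_-^d)^c$ it can be made negative.

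For sufficiency (condition \eqref{resnickx} $\Rightarrow$ association), I would extend Theorem \ref{liggettthm} from $\mcl{A}$ to the extended generator $I(p)$, which is legitimate here since the symbol is bounded and hence $I(p)|_{\mcl{D}(\mcl{A})}=\mcl{A}$. Concretely, running Liggett's interpolation $\psi(s):=T_s\big((T_{t-s}f)(T_{t-s}g)\big)$ and differentiating gives $\psi'(s)=T_s\big[I(p)(uv)-u\,I(p)v-v\,I(p)u\big]$ with $u=T_{t-s}f,\ v=T_{t-s}g$; stochastic monotonicity keeps $u,v$ in $\mcl{F}_i$, and positivity of $T_s$ together with the identity above (whose right-hand side is $\geq 0$ under \eqref{resnickx}) forces $\psi'(s)\geq 0$, hence $T_t(fg)\geq (T_tf)(T_tg)$, i.e. $\Cov^x\big(f(X_t),g(X_t)\big)\geq 0$. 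A density step then removes the $C_b^2$ restriction and upgrades this to all non-decreasing $f,g$ with existing covariance, which is association.

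For necessity (association $\Rightarrow$ condition \eqref{resnickx}) I would bypass the generator and use the small-time asymptotics of Theorem \ref{kuhn}. The key computation is that, for bounded $f,g\in\mcl{F}_i$,
\[
\lim_{t\searrow 0}\frac{1}{t}\Cov^x\big(f(X_t),g(X_t)\big) = \int_{y\neq 0}\big(f(x+y)-f(x)\big)\big(g(x+y)-g(x)\big)\,\nu(x,dy),
\]
obtained by writing the covariance as $\E^x\{(f(X_t)-f(x))(g(X_t)-g(x))\} - \E^x(f(X_t)-f(x))\,\E^x(g(X_t)-g(x))$: the first term has the stated limit by Theorem \ref{kuhn} applied to $\phi(y):=(f(x+y)-f(x))(g(x+y)-g(x))$, while the second is $O(t^2)$ since each factor is $O(t)$ and vanishes at $t=0$. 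Association forces the left-hand side $\geq 0$, so the integral is $\geq 0$ for every $f,g\in\mcl{F}_i$. To conclude, suppose $\nu(x_0,(\R_+^d\cup\R_-^d)^c)>0$; then some pair $(i,j)$ carries mass on a compact $K\subset\{y_i>0,\ y_j<0\}$ bounded away from the coordinate hyperplanes and the origin. Choosing $f=f_i(z_i)\in\mcl{F}_i$ with $f_i$ flat then strictly increasing past $x_i$, and $g=g_j(z_j)\in\mcl{F}_i$ with $g_j$ strictly increasing up to and flat past $x_j$, makes $\phi$ supported in $\{y_i>0,\ y_j<0\}$ and strictly negative on $K$, so the integral is $<0$, a contradiction.

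The two genuinely technical points, which I expect to be the main obstacles, are the following. On the sufficiency side, extending Liggett's theorem to $I(p)$ requires justifying that $s\mapsto T_s h$ is differentiable with derivative $I(p)T_sh$ on the relevant class and that $T_s$ preserves enough regularity; this is where boundedness of the symbol and the Schilling/K\"uhn mapping properties enter. On the necessity side, the function $\phi$ built from two monotone functions is never in $C_0(\R^d)$ (it tends to a nonzero constant along the $(i,j)$-directions), so Theorem \ref{kuhn} does not apply verbatim; I would patch this with a cutoff $\phi\chi_R$, pass $R\to\infty$ by dominated convergence against $\nu(x,\cdot)$ (finite away from the origin), and control the discarded tail $\phi(1-\chi_R)$ uniformly in small $t$ via the standard estimate $\tfrac1t\,\dP^x(|X_t-x|\geq R)\lesssim \nu(x,\{|y|\gtrsim R\})\to 0$ available for bounded symbols.
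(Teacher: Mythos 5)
Your sufficiency argument is essentially the paper's: the same carr\'e-du-champ identity reduces everything to the sign of $\int(f(x+y)-f(x))(g(x+y)-g(x))\,\nu(x,dy)$, and your interpolation $\psi(s)=T_s\bigl((T_{t-s}f)(T_{t-s}g)\bigr)$ is just the differential form of the paper's Theorem \ref{extliggett}, which is proved by setting $F(t)=T_t(fg)-T_tf\,T_tg$ and solving the Cauchy problem $F'=I(p)F+G$ with $G\geq0$ (Lemma \ref{extcauchy}); the technical inputs you flag (differentiability of $s\mapsto T_sh$ with derivative $I(p)T_sh$, preservation of regularity) are exactly Lemmas \ref{integrogenerates} and \ref{integroderivative}. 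Where you genuinely diverge is the necessity direction: the paper runs Theorem \ref{extliggett} backwards (association $\Rightarrow$ semigroup inequality $\Rightarrow$ generator inequality $\Rightarrow$ nonnegativity of the jump integral) and only then contradicts with step-like test functions, whereas you bypass the generator entirely and compute $\lim_{t\searrow0}t^{-1}\Cov^x(f(X_t),g(X_t))$ via the small-time asymptotics of Theorem \ref{kuhn}. This is in fact the technique the paper reserves for the PUOD necessity (Theorem \ref{PODnec}), and it buys you something the paper's route for association does not: you only need a locally bounded symbol for that direction, not a bounded one, and you avoid invoking the extended Liggett equivalence twice. The price is the $C_0$ issue you correctly identify: your test function $\phi$ is bounded but not vanishing at infinity. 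Your proposed tail bound $t^{-1}\dP^x(|X_t-x|\geq R)\lesssim\nu(x,\{|y|\gtrsim R\})$ is not quite the standard maximal inequality (which is stated in terms of the symbol, not the L\'evy measure), but the Portmanteau form of Theorem \ref{kuhn} applied to the set $A=\{|y|\geq R\}$, which is bounded away from the origin, gives $\lim_{t\searrow0}t^{-1}\dP^x(|X_t-x|\geq R)=\nu(x,\{|y|\geq R\})\to0$ for a.e.\ $R$, which is all your cutoff argument needs; so the gap is cosmetic rather than substantive. The rest (the $O(t^2)$ estimate for the product of means, the choice of one-coordinate monotone test functions flat near $x$) is sound and matches the paper's constructions.
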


We prove this by first showing that association of $X_t$'s is equivalent to a Liggett-type inequality for the extended generator, the statement of which is in the following theorem.

\begin{theorem}
\label{extliggett}
Let $X=(X_t)_{t\geq0}$ be a rich Feller processes with stochastically monotone transition semigroup $(T_t)_{t\geq0}$, a generator $(\mcl{A},\mcl{D}(\mcl{A}))$, bounded symbol $p(x,\xi)$, and an (extended) integro-differential operator $I(p)$. Assume $x\mapsto p(x,0)$ is continuous. Then 
\begin{equation}
\label{eq:extliggett}
I(p) fg \geq f I(p) g + g I(p) f, \hspace{.5cm} \forall f,g\in C_b^2(\R^d)\cap \mcl{F}_i
\end{equation}

\noindent if and only if 

\begin{equation}
\label{semiass}
\forall t\geq0, \hspace{.5cm} T_t fg \geq T_t f \cdot  T_t g,  \hspace{.5cm} \forall f,g\in C_b(\R^d)\cap\mcl{F}_i.
\end{equation}
\end{theorem}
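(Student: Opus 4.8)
The plan is to establish the two implications separately, in the spirit of Liggett's original argument but carried out at the level of the extended operator $I(p)$ acting on $C_b^2(\R^d)$. Throughout I use that, since the symbol is bounded, $C_b^2(\R^d)$ lies in the domain of the extended generator and $I(p)$ coincides there with $\tfrac{d}{dt}T_t\big|_{t=0^+}$ (Schilling \cite[Lem.2.3]{Schilling1998}); combined with the pointwise convergence $T_sh(x)\to h(x)$ as $s\searrow0$ for $h\in C_b(\R^d)$ (from stochastic continuity and bounded convergence), this yields $\tfrac{d}{dt}T_th(x)\big|_{0^+}=I(p)h(x)$ for every $h\in C_b^2(\R^d)$ and each $x$.

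For the direction $(\ref{semiass})\Rightarrow(\ref{eq:extliggett})$, which I expect to be routine, I would fix $f,g\in C_b^2(\R^d)\cap\mcl{F}_i$ and set $\psi(t):=T_t(fg)-(T_tf)(T_tg)$. By hypothesis $\psi(t)\geq0$ for all $t\geq0$, while $\psi(0)=0$, so the right derivative at $0$ must be nonnegative wherever it exists. Since $fg\in C_b^2(\R^d)$, the differentiation facts above apply to $T_t(fg)$, and the ordinary product rule applies to $t\mapsto(T_tf)(T_tg)$ at $t=0$, giving
$$\psi'(0^+)=I(p)(fg)-\big(I(p)f\big)g-f\big(I(p)g\big)\geq0,$$
which is exactly (\ref{eq:extliggett}).

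For the converse $(\ref{eq:extliggett})\Rightarrow(\ref{semiass})$, I would fix $t>0$ and $f,g\in C_b^2(\R^d)\cap\mcl{F}_i$ and interpolate between the two sides of (\ref{semiass}) via
$$\phi(s):=T_s\big[(T_{t-s}f)(T_{t-s}g)\big],\qquad 0\leq s\leq t,$$
so that $\phi(0)=(T_tf)(T_tg)$ and $\phi(t)=T_t(fg)$. Writing $u_s:=T_{t-s}f$ and $v_s:=T_{t-s}g$, and using the backward relation $\tfrac{d}{ds}u_s=-I(p)u_s$ (likewise for $v_s$) together with $\tfrac{d}{ds}T_sh=T_sI(p)h$ on $C_b^2(\R^d)$, the $s$-derivative collapses to
$$\phi'(s)=T_s\big[I(p)(u_sv_s)-\big(I(p)u_s\big)v_s-u_s\big(I(p)v_s\big)\big].$$
Stochastic monotonicity gives $u_s,v_s\in\mcl{F}_i$, so the bracket is nonnegative by (\ref{eq:extliggett}); since $T_s$ is a positive operator, $\phi'(s)\geq0$, and integrating over $[0,t]$ yields $T_t(fg)\geq(T_tf)(T_tg)$. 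A final mollification of arbitrary $f,g\in C_b(\R^d)\cap\mcl{F}_i$ by smooth monotone functions (mollification preserves monotonicity and boundedness), together with the pointwise continuity of $T_t$ and bounded convergence, then extends the inequality to the full class appearing in (\ref{semiass}).

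The main obstacle is the regularity that underpins the derivative of $\phi$ in the converse direction: to apply (\ref{eq:extliggett}) to $u_s,v_s$ and to justify the identities $\tfrac{d}{ds}u_s=-I(p)u_s$ and $\tfrac{d}{ds}T_s(u_sv_s)=T_sI(p)(u_sv_s)$, I need the evolved functions $T_rf,T_rg$ and their product to remain in $C_b^2(\R^d)$, i.e. that $C_b^2(\R^d)$ is invariant under the semigroup and that $I(p)$ commutes with $T_r$ there. This is not automatic for a general Feller semigroup, but I expect it to follow from the boundedness of the symbol, which controls $\|I(p)h\|_\infty$ in terms of $\|h\|_{C_b^2}$ and makes the Dynkin representation $T_rh-h=\int_0^rT_sI(p)h\,ds$ together with its differentiated form available on $C_b^2(\R^d)$; the hypothesis that $x\mapsto p(x,0)$ be continuous serves to keep $I(p)$ well-behaved on constants, so that no spurious killing term obstructs the product rule. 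Where the invariance is genuinely delicate, my fallback would be a Yosida-type regularization: replace $I(p)$ by the bounded operators $I(p)_\lambda=\lambda I(p)(\lambda-I(p))^{-1}$, verify that the Liggett-type inequality and positivity/monotonicity persist for the approximating semigroups $e^{sI(p)_\lambda}$, run the now-routine interpolation for the approximants, and pass to the limit $\lambda\to\infty$ using strong convergence of the resolvents.
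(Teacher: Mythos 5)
Your proof of the direction $(\ref{semiass})\Rightarrow(\ref{eq:extliggett})$ is the same as the paper's: form the difference $T_t(fg)-(T_tf)(T_tg)$, which vanishes at $t=0$ and is nonnegative, and take the right derivative at $0$. For the converse, however, you take a genuinely different route. The paper sets $F(t):=T_t(fg)-T_tf\cdot T_tg$, shows $F'(t)\geq I(p)F(t)$ using the Liggett inequality applied to $T_tf,T_tg$, and then invokes a Duhamel-type representation (its Lemma \ref{extcauchy}, the Cauchy problem for $I(p)$, proved in the appendix) to write $F(t)=\int_0^t T_{t-s}G(s)\,ds$ with $G\geq0$, concluding by positivity of the semigroup. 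You instead interpolate with $\phi(s)=T_s\bigl[(T_{t-s}f)(T_{t-s}g)\bigr]$ and show $\phi'(s)\geq0$ directly, so that no Duhamel formula is needed — only the fundamental theorem of calculus pointwise in $x$. Your computation of $\phi'(s)$ is correct, and the trade-off is clear: you dispense entirely with the paper's Lemma \ref{extcauchy}, but you must justify differentiating $s\mapsto T_s[u_sv_s]$ where the argument itself moves, which requires the same ingredients the paper assembles in its Lemmas \ref{integrogenerates} and \ref{integroderivative} (the locally uniform derivative $\tfrac{d}{dt}T_th=T_tI(p)h=I(p)T_th$ on $C_b^2$) plus a product rule for the locally uniform derivative. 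Both arguments rest on the claim that $T_r$ maps $C_b^2(\R^d)\cap\mcl{F}_i$ into itself so that the Liggett inequality can be applied to the evolved functions; the paper asserts this with the phrase ``by monotonicity'' (which only yields membership in $\mcl{F}_i$, not in $C_b^2$), whereas you explicitly flag it as the main obstacle and sketch fallbacks. So your proposal is no worse off than the paper on this point, and is arguably the more economical argument once that invariance is granted.
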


Inequality (\ref{semiass}) in Theorem \ref{extliggett} is another way to formulate that $X_t$ is associated for all $t\geq0$. Since inequality (\ref{semiass}) means, for all $x\in\R^d$, $\E^x f(X_t) g(X_t) \geq \E^x f(X_t) \E^x g(X_t)$
which means $X_t$ is associated with respect to $\dP^x$. Inequality \eqref{eq:extliggett} intuitively means that the process moves either up or down, which in multidimensional Euclidean space, means that if the process is currently at point $x$, then it can only move to another point $y$ if $y\geq x$ or $y\leq x$ componentwise. 

\par

Notice that in Theorem \ref{extliggett}, we are using the extended generator $I(p)$. In previous statements of Liggett's characterization, the generator $\mcl{A}$ is used. But we need to use $I(p)$ for the reasons given in the comments after Theorem \ref{liggettthm}. Hence, it is necessary to show the Liggett-type inequality as a characterization of association for rich Feller processes. Such an extension has not been seen by the author of this paper. We first need the following lemmas to prove Theorem \ref{liggettthm}. We will often assume Setting \ref{set1} throughout this section.

\par

\begin{setting} 
\label{set1}
{\rm Let $X=(X_t)_{t\geq0}$ be a rich Feller process with semigroup $(T_t)_{t\geq0}$, generator $(\mcl{A},\mcl{D}(\mcl{A}))$, symbol $p(x,\xi)$, (extended) integro-differential operator $I(p)$, and characteristics $(a(x), b(x), \Sigma(x), \nu(x,dy))$, where $b,\Sigma,\nu$ are the same before, except we have an additional characteristic $a:\R^d \rightarrow \R_+$ which represents the ``killing rate." }
\end{setting}

\begin{remark}
{\rm With the additional characteristic $a(x)$, function $-p(x,\xi)$ would look like  $a(x)+\text{ RHS of equation } (\ref{symbol})$. Also, $I(p)f(x)$ would look like $- a(x) f(x)+ \text{RHS of equation }(\ref{integro})$. Unless stated otherwise, we will assume that $a(x)\equiv0$. For more on the case when $a(x)\not\equiv0$, see the paper by Schnurr \cite{Schnurr2017}, which discusses such processes satisfying $a(x)\not\equiv0$ and their connection to the symbol.} 
\end{remark}




\par

\begin{lemma}
\label{integrogenerates}
Assume Setting \ref{set1} and that $p(x,\xi)$ is bounded. 
Then $I(p)$ generates the semigroup $(T_t)_{t\geq0}$ locally uniformly, i.e. 
\begin{equation}
\label{generates}
I(p) f = \lim_{t\searrow0}\frac{1}{t}(T_t f - f), \hspace{.5cm} f\in C_b^2(\R^d),
\end{equation}
where the convergence is locally uniform.
\end{lemma}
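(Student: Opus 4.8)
The plan is to reduce the statement to the \emph{mild form} of the semigroup equation and then differentiate at $t=0$, rather than expanding $\E^x f(X_t)$ by hand. Concretely, I would first show that
\[
T_t f - f = \int_0^t T_s\,I(p)f\,ds, \qquad f\in C_b^2(\R^d),
\]
and then divide by $t$ and let $t\searrow 0$. Two consequences of the bounded-symbol hypothesis are used throughout: by \cite[Lem.~2.1]{Schilling1998} boundedness of $p(x,\xi)$ is equivalent to uniform boundedness of the characteristics, so that $\sup_x|b(x)|$, $\sup_x\|\Sigma(x)\|$ and $\sup_x\int(1\wedge|y|^2)\,\nu(x,dy)$ are all finite; splitting the jump integral in \eqref{integro} at $|y|=1$ and using a second-order Taylor estimate on the small jumps then yields the uniform operator bound $\|I(p)g\|_\infty\le c\,\|g\|_{C_b^2(\R^d)}$, so $I(p)\colon C_b^2(\R^d)\to B_b(\R^d)$ is bounded. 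The same uniform bounds give tightness of the large-jump part, which is what lets Theorem \ref{kuhn} be applied uniformly on compacta.

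For the mild form I would argue in two stages. The base case $f\in C_c^\infty(\R^d)\subset\mcl{D}(\mcl{A})$ is the classical identity $T_tf-f=\int_0^t T_s\mcl{A}f\,ds$ together with $\mcl{A}f=I(p)f$ from \cite[Lem.~2.3]{Schilling1998}. The passage to $f\in C_b^2(\R^d)$ is \emph{not} a global sup-norm approximation (one cannot approximate, e.g., a constant by $C_c^\infty$ functions in that norm); instead it rests on the fact that $C_b^2(\R^d)$ lies in the domain of the extended (martingale) generator of a rich Feller process with bounded symbol, so that $f(X_t)-\int_0^t I(p)f(X_s)\,ds$ is a $\dP^x$-martingale, whence the displayed identity follows by taking $\E^x$ and Fubini. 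Granting the mild form, dividing by $t$ gives $\tfrac1t(T_tf-f)=\tfrac1t\int_0^t T_s\,I(p)f\,ds$, and the lemma reduces to showing that $\tfrac1t\int_0^t T_s g\,ds\to g$ locally uniformly for $g:=I(p)f$, i.e. that $s\mapsto T_s g$ is right-continuous at $0$ in the locally uniform topology. Pointwise this is immediate from the right-continuity of the paths and dominated convergence; to upgrade it to uniformity on a compact $K$, I would localise $g=g\psi+g(1-\psi)$ with $\psi\in C_c^\infty(\R^d)$ equal to $1$ near $K$, treating $g\psi\in C_0(\R^d)$ by the strong continuity of the Feller semigroup and the remainder, supported off $K$, by the large-jump estimate of Theorem \ref{kuhn}.

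The main obstacle is the extension of the mild (Dynkin) representation from $C_c^\infty(\R^d)$ to all of $C_b^2(\R^d)$, together with the \emph{local} uniformity of the final limit. The difficulty is precisely the nonlocal, non-vanishing-at-infinity nature of the objects involved: $I(p)f$ need not belong to $C_0(\R^d)$ and the symbol is not assumed continuous in $x$, so one can invoke neither global sup-norm approximation nor strong continuity of $(T_t)$ on $C_0(\R^d)$ directly. It is exactly here that the bounded-symbol hypothesis does the work---through the uniform boundedness of the characteristics (controlling the compensated small jumps and giving uniform tightness of the large jumps) and through Theorem \ref{kuhn}, which converts that tightness into uniform-on-compacta control of the far-jump contribution. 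The remaining ingredients---the base case on $C_c^\infty(\R^d)$, the elementary bound on $I(p)$, and the pointwise convergence---are routine.
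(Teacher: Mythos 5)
Your derivation of the mild identity $T_tf-f=\int_0^t T_s\,I(p)f\,ds$ is in essence the paper's own argument: the paper likewise invokes the fact (Schilling 1998, Lem.\ 3.2) that $M_t^f=f(X_t)-f(X_0)-\int_0^tI(p)f(X_{s-})\,ds$ is a $\dP^x$-martingale for every $f\in C_b^2(\R^d)$, then takes $\E^x$ and applies Fubini; your preliminary ``base case on $C_c^\infty(\R^d)$'' is therefore redundant, since the martingale statement already covers all of $C_b^2(\R^d)$. Where you diverge is the final limit $\frac1t\int_0^tT_sg\,ds\to g$ for $g=I(p)f$. The paper does this in one stroke: it quotes Schilling's Remark 4.5(ii) to get $I(p)f\in C_b(\R^d)$, invokes the $C_b$-Feller property (Lemma A.\ref{cbfeller}, applicable here since $a\equiv 0$ forces $p(x,0)\equiv 0$) to conclude that $s\mapsto T_sg$ is locally uniformly continuous, and then applies the fundamental theorem of calculus for Bochner integrals on each compact set.

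Your localisation $g=g\psi+g(1-\psi)$ is a workable substitute, but as written it has two loose ends. First, you establish only $I(p)\colon C_b^2(\R^d)\to B_b(\R^d)$ and even stress that the symbol need not be continuous in $x$; yet the piece $g\psi$ lies in $C_0(\R^d)$ --- and your pointwise limit via path right-continuity plus dominated convergence goes through --- only if $g$ is continuous. That continuity of $I(p)f$ is exactly what the paper imports from Schilling, and your argument needs it just as much. Second, Theorem \ref{kuhn} is a pointwise-in-$x$ small-time statement and does not by itself yield the bound on $\dP^x\bigl(|X_s-x|>\delta\bigr)$ \emph{uniformly over} $x\in K$ that your off-$K$ piece requires; the correct tool is the maximal inequality $\sup_x\dP^x\bigl(\sup_{r\le s}|X_r-x|\ge\delta\bigr)\le c\,s\,\sup_x\sup_{|\xi|\le\delta^{-1}}|p(x,\xi)|$, which is where boundedness of the symbol genuinely enters. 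With those two repairs your route closes, but it is longer than the paper's, which gets the same local uniformity directly from the $C_b$-Feller property and the Bochner-integral fundamental theorem of calculus.
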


For a detailed proof,  see the Appendix.


\begin{lemma}
\label{integroderivative}
Assume Setting \ref{set1} and the symbol $p(x,\xi)$ is bounded 
For all $f\in C_b^2(\R^d)$,  $$\frac{d}{dt} T_t f = I(p)T_t f = T_t I(p) f$$
where the derivative is defined based on locally uniform convergence. 
\end{lemma}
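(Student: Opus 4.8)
The statement is the $C_b^2$/locally-uniform analogue of the classical semigroup identity $\frac{d}{dt}T_tf = T_t\mcl{A}f = \mcl{A}T_tf$ valid for $f$ in the domain of the generator, and the two asserted equalities are precisely the backward ($T_tI(p)f$) and forward ($I(p)T_tf$) Kolmogorov equations. The whole argument is driven by Lemma \ref{integrogenerates}, which supplies $I(p)f = \lim_{h\searrow0}h^{-1}(T_hf-f)$ locally uniformly for every $f\in C_b^2(\R^d)$, together with the semigroup law, which lets me split the difference quotient in two ways, $h^{-1}(T_{t+h}f-T_tf) = T_t\big(h^{-1}(T_hf-f)\big) = h^{-1}\big(T_h(T_tf)-T_tf\big)$. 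Throughout, every limit is understood in the locally uniform sense demanded by the statement.

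For the backward equation I would read off the right derivative from the first splitting. By Lemma \ref{integrogenerates} the family $g_h := h^{-1}(T_hf-f)$ converges locally uniformly to $I(p)f$, and since $p$ is bounded, $I(p)f\in C_b(\R^d)$ and the $g_h$ are globally bounded. The one genuine analytic input is then a continuity property of $T_t$: if $g_n\to g$ locally uniformly with $\sup_n\|g_n\|_\infty<\infty$, then $T_tg_n\to T_tg$ locally uniformly. I would prove this from the probabilistic representation $T_tg(x)=\E^x g(X_t)$: dominated convergence gives pointwise convergence $T_tg_n(x)\to T_tg(x)$, and this is upgraded to local uniformity by splitting $\E^x[g_n(X_t)-g(X_t)]$ over $\{|X_t|\le R\}$ and its complement and using the uniform tightness $\sup_{x\in K}\dP^x(|X_t|>R)\to0$ (as $R\to\infty$) of the laws of $X_t$ for $x$ ranging over a compact $K$, which follows from the Feller property. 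Applying this with $g_h\to I(p)f$ yields that the right derivative of $t\mapsto T_tf$ equals $T_tI(p)f$ locally uniformly.

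For the forward equation I would use the second splitting, $h^{-1}(T_{t+h}f-T_tf)=h^{-1}(T_h(T_tf)-T_tf)$, which is exactly the difference quotient of Lemma \ref{integrogenerates} evaluated at $g=T_tf$. Provided $T_tf\in C_b^2(\R^d)$, Lemma \ref{integrogenerates} gives that this converges locally uniformly to $I(p)(T_tf)$, and comparing with the backward computation yields $I(p)T_tf=T_tI(p)f$. The main obstacle is precisely the regularity claim $T_tf\in C_b^2(\R^d)$, i.e. that the semigroup maps the domain $C_b^2(\R^d)$ of $I(p)$ into itself: this is not automatic, since differentiating $T_tf$ in the space variable also differentiates the state-dependent characteristics $(b(x),\nu(x,dy))$, which are only assumed bounded. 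I would address this either by invoking the regularity of the semigroup available under the bounded-symbol hypothesis, so that $T_t$ preserves $C_b^2(\R^d)$ and $I(p)T_tf$ is literally defined by \eqref{integro}, or by reading the forward equation as the statement that $T_tf$ lies in the domain of $I(p)$ regarded as the generator for locally uniform convergence, in which case $I(p)T_tf=T_tI(p)f$ holds by the computation above without any a priori smoothness.

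Finally, to obtain a genuine two-sided derivative I would treat the left difference quotient $h^{-1}(T_tf-T_{t-h}f)=T_{t-h}\big(h^{-1}(T_hf-f)\big)$ symmetrically, writing it as $T_{t-h}(g_h-I(p)f)+T_{t-h}I(p)f$. The first term tends to $0$ locally uniformly by the continuity property above, made uniform in the (bounded, contractive) operators $T_{t-h}$, and the second tends to $T_tI(p)f$ by the locally uniform strong continuity of $s\mapsto T_sg$ at $s=t$ for $g=I(p)f\in C_b(\R^d)$, again via the tightness estimate. Hence both one-sided derivatives equal $T_tI(p)f=I(p)T_tf$, which is the assertion.
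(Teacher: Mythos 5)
Your treatment of the first equality, $\frac{d}{dt}T_tf = T_tI(p)f$, is sound but follows a genuinely different route from the paper. The paper never exchanges $T_t$ with the limit of the difference quotients; instead it uses the Dynkin-type identity $T_hf - f = \int_0^h T_sI(p)f\,ds$ (from the martingale $M_t^f$ in the proof of Lemma \ref{integrogenerates}) to write $T_{t+h}f - T_tf = \int_0^h T_sT_tI(p)f\,ds$, and then differentiates via the fundamental theorem of calculus for Bochner integrals, using the $C_b$-Feller property (Lemma A.1, which is where the hypothesis that $x\mapsto p(x,0)$ is continuous enters) to get continuity of $s\mapsto T_sT_tI(p)f$ on compacts. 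Your alternative --- passing the locally uniform limit $g_h = h^{-1}(T_hf-f)\to I(p)f$ through $T_t$ via uniform tightness of $\dP^x(|X_t|>R)$ over compact sets of starting points --- is a legitimate substitute, though you should justify the asserted global bound $\sup_h\|g_h\|_\infty<\infty$; it does hold, but only because of the same integral identity ($\|g_h\|_\infty\le\|I(p)f\|_\infty$ since the $T_s$ are contractions), so you cannot entirely avoid that ingredient.

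The gap is in the second equality, $I(p)T_tf = T_tI(p)f$. You correctly identify that $T_tf\in C_b^2(\R^d)$ is not automatic, but neither of your proposed fixes closes the argument: boundedness of the symbol does not imply that $T_t$ preserves $C_b^2(\R^d)$ (the characteristics $b(x),\nu(x,dy)$ are only bounded, not smooth, and no such regularity theorem is available in this paper), and reinterpreting $I(p)$ as ``the generator in the locally uniform sense'' changes the operator whose commutation with $T_t$ is being asserted, so it proves a different statement. The paper's device, which your proposal is missing, is a cutoff argument: take $\phi_n\in C_c^\infty(\R^d)$ with $\mathbbm{1}_{B(0,n)}\le\phi_n\le1$, so that $f\phi_n\in C_c^2(\R^d)\subset\mcl{D}(\mcl{A})$; on $\mcl{D}(\mcl{A})$ one has the classical semigroup identity $\mcl{A}T_t = T_t\mcl{A}$ together with $I(p)|_{\mcl{D}(\mcl{A})}=\mcl{A}$, whence $I(p)T_t(f\phi_n) = T_tI(p)(f\phi_n)$, and the conclusion for $f$ follows by letting $n\to\infty$. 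Without some version of this step (or an honest proof that $T_tf$ lies in a set on which $I(p)$ is defined and the commutation holds), the forward equation remains unproved.
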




For a detailed proof, see the Appendix. Finally, we can extend Liggett's solution to a Cauchy problem \cite[Thm.2.15, p.19]{Liggett1985} to integro-differential operators that generate a semigroup locally uniformly.


\begin{lemma}[Cauchy problem]
\label{extcauchy}
Let $(\mcl{A},\mcl{D}(\mcl{A}))$ be a (rich) Feller generator of a semigroup $(T_t)_{t\geq0}$ with bounded \Levy characteristics and symbol $p(x,\xi)$ 
Let $I(p)$ be the extended generator on $C_b^2(\R^d)$. Suppose $F,G:[0,\infty)\rightarrow C_b(\R^d)$ such that 
\mypar

\noindent (a) $F(t)\in\mcl{D}(I(p))$ for all $t\geq0$
\\
(b) $G(t)$ is continuous on $[0,\infty)$ (locally uniformly)
\\ 
(c) $F'(t) = I(p) F(t) + G(t)$ for all $t\geq0$.
\mypar

\noindent Then $\displaystyle F(t) = T_t F(0) + \int_0^t T_{t-s} G(s) ds.$
\end{lemma}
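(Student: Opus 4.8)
I would prove this by Duhamel's principle (variation of parameters), adapting Liggett's argument for the bounded-generator Cauchy problem \cite[Thm.2.15]{Liggett1985} to the extended operator $I(p)$ and to the locally uniform topology in which $I(p)$ generates $(T_t)_{t\geq0}$. The plan is to fix $t>0$, define the $C_b(\R^d)$-valued auxiliary function
\[
H(s) := T_{t-s} F(s), \qquad s\in[0,t],
\]
differentiate it in $s$, and show that the two ``moving parts'' (the semigroup time $t-s$ and the argument $F(s)$) combine so that $H'(s) = T_{t-s} G(s)$. Integrating this identity over $[0,t]$ and reading off the endpoint values $H(t)=T_0 F(t)=F(t)$ and $H(0)=T_t F(0)$ then yields exactly $F(t) = T_t F(0) + \int_0^t T_{t-s} G(s)\,ds$.

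The core computation is the product rule for $H$. Splitting the difference quotient as
\[
\frac{H(s+h)-H(s)}{h} = T_{t-s-h}\!\left(\frac{F(s+h)-F(s)}{h}\right) + \frac{T_{t-s-h}-T_{t-s}}{h} F(s),
\]
I would pass to the limit $h\to 0$ in each summand. The first summand converges to $T_{t-s} F'(s)$ using condition (c) (so that the difference quotient of $F$ converges) together with the continuity of the semigroup in its time parameter. For the second summand, since condition (a) guarantees $F(s)\in\mcl{D}(I(p))=C_b^2(\R^d)$, Lemma \ref{integroderivative} applies and gives $\frac{d}{du}T_u F(s) = I(p) T_u F(s) = T_u I(p) F(s)$; hence this term converges to $-T_{t-s} I(p) F(s)$. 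Combining, $H'(s) = T_{t-s}\big(F'(s) - I(p)F(s)\big) = T_{t-s} G(s)$ by (c). Finally, continuity (locally uniform) of $s\mapsto T_{t-s}G(s)$ follows from condition (b) and the contractivity of $T_r$, so the fundamental theorem of calculus for $C_b(\R^d)$-valued functions gives $H(t)-H(0)=\int_0^t T_{t-s}G(s)\,ds$.

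The main obstacle, and where essentially all the real work lies, is that the derivatives in Lemmas \ref{integrogenerates} and \ref{integroderivative} hold only in the \emph{locally uniform} topology rather than in sup-norm, because $I(p)f$ for $f\in C_b^2(\R^d)$ need not lie in $C_0(\R^d)$. Consequently the naive sup-norm contraction estimate $\|T_r\|\le 1$ is not by itself enough to pass the limits above through the operators $T_{t-s-h}$: I must show that $T_r$ sends uniformly bounded, locally uniformly convergent families to locally uniformly convergent families, and that $r\mapsto T_r g$ is locally uniformly continuous. This is exactly where the Feller property enters — via tightness of the transition probabilities on compact sets (equivalently, the small-time and generation estimates underpinning Lemmas \ref{integrogenerates}--\ref{integroderivative}) — and it is the step I would write out most carefully, keeping track of the uniform bound $\sup_{|h|\le\delta}\|\tfrac{F(s+h)-F(s)}{h}\|_\infty<\infty$ coming from the differentiability of $F$ on the compact interval $[0,t]$. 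Once these locally-uniform continuity and limit-interchange facts are in place, the computation above closes and the Duhamel formula follows.
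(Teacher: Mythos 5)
Your proposal follows essentially the same route as the paper's proof: differentiate $s\mapsto T_{t-s}F(s)$ by the product rule, identify the derivative with $T_{t-s}\bigl(F'(s)-I(p)F(s)\bigr)=T_{t-s}G(s)$ using Lemma \ref{integroderivative} and hypothesis (c), verify locally uniform continuity of $s\mapsto T_{t-s}G(s)$ via the $C_b$-Feller property, and integrate with the fundamental theorem of calculus for Bochner integrals. The obstacle you correctly flag --- passing limits that hold only locally uniformly through the $h$-dependent operators $T_{t-s-h}$ --- is exactly what the paper addresses by refining your two-term splitting into four terms (adding and subtracting $[T_{t-s-h}-T_{t-s}]F'(s)$ so that the residual term is controlled by contractivity and the $C_b$-Feller continuity), so your outline matches the paper's argument in substance.
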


For a detailed proof, see the Appendix. We are now ready to prove the main theorems of this section.
\\ \\
 \textbf{Proof of Theorem \ref{extliggett}}
 
\begin{proof}
$(\Leftarrow)$ Assume $T_t fg \geq T_t f \hspace{.1cm}T_t g$ for all $f,g\in C_b^2(\R^d)\cap \mcl{F}_i$. This implies
\begin{align*}
T_t fg -fg &\geq T_tf \hspace{.1cm} T_tg -fg  =T_tf \hspace{.1cm}T_tg -fg + g \hspace{.1cm}T_t f - g \hspace{.1cm}T_t f= T_t f[T_t g- g] + g[T_t f-f].
\end{align*}
Hence, for all $t>0$,
$\displaystyle\frac{1}{t} (T_t fg - fg) \geq T_t f  \hspace{.1cm}\frac{T_t g- g}{t} + g\hspace{.1cm} \frac{T_t f-f}{t}$.
Therefore, 
\begin{align*}
I(p) fg   = \lim_{t\searrow0} \frac{1}{t} (T_t fg - fg) & \geq \lim_{t\searrow0} \left(T_t f  \hspace{.1cm}\frac{T_t g- g}{t} + g\hspace{.1cm} \frac{T_t f-f}{t}\right)
\\ & = \left(\lim_{t\searrow0} T_t f \right) \hspace{.1cm} \left(\lim_{t\searrow0} \frac{T_t g- g}{t}\right) + g\hspace{.1cm} \left(\lim_{t\searrow0}  \frac{T_t f-f}{t}\right)
\\ & = f I(p) g + g I(p) f,
\end{align*}
where the convergence is locally uniform.
\\
\\
$(\Rightarrow)$ Assume $I(p) fg \geq f I(p) g + gI(p) f$ for all $f,g\in C_b^2(\R^d)\cap \mcl{F}_i$. By monotonicity, $T_t f, T_t g \in C_b^2(\R^d)\cap \mcl{F}_i$, which implies 
\begin{equation}
\label{eq:liggettwithsemi}
I(p) (T_t f) (T_t g) \geq T_tf [I(p)T_t g] + T_t g [I(p) T_tf].
\end{equation}
Define $F(t) := T_t fg -  T_t f \hspace{.1cm}T_t g$. Then by Lemma \ref{integroderivative}, we have 
\begin{align*}
F'(t) 
 = I(p) T_t fg - (T_t f [I(p) T_t g] + T_t g [I(p) T_t f]) &\geq I(p) T_t fg - (I(p) T_t f  \hspace{.1cm} T_tg)
\\ & = I(p) (T_t fg - T_t f\hspace{.1cm} T_tg)
\\ & = I(p) F(t)
\end{align*}
where the inequality comes from \eqref{eq:liggettwithsemi}.
Define $G(t) := F'(t) - I(p) F(t)\geq0$. Then by Lemma \ref{extcauchy}, the solution to the Cauchy problem $F'(t) = G(t) + I(p) F(t)$ is given by 
$$F(t) = T_t F(0) + \int_0^t T_{t-s} G(s) ds =  \int_0^t T_{t-s} G(s) ds$$
since $F(0)=0$. Since $G(s)\geq0$ for all $s$, and $T_{t-s}$ is a positivity-preserving linear operator, $F(t)\geq0$ for all $t\geq0$. Thus, $T_t fg \geq T_t f \cdot T_t g$ for all $f,g\in C_b^2(\R^d)\cap\mcl{F}_i$. This inequality also holds for all $f,g\in C_b(\R^d)\cap \mcl{F}_i$, since we can approximate non-decreasing, continuous, bounded functions by non-decreasing smooth, bounded functions, and then use a dominated convergence argument.
\end{proof}

\begin{remark}
{\rm For the necessary condition, we did not need stochastic monotonicity.}
\end{remark}

\noindent \textbf{Proof of Theorem \ref{resnickxthm}}

\begin{proof}
$(\Leftarrow)$. Fix $x\in\R^d$. Assume $\nu(x,(\R_+^d\cup\R_-^d)^c)=0$. Then, for all $f,g\in C_b^2(\R^d)\cap\mcl{F}_i$,
\begin{align*}
& I(p) fg(x) - g(x) I(p) f(x) - f(x)I(p) g(x) 
\\ & = b(x)\cdot \nabla fg(x)+ \int_{y\neq0} \left(f(x+y) g(x+y)- f(x)g(x) - y\cdot \nabla fg(x)\mathbbm{1}_{(0,1)}(|y|)\right)\nu(x,dy)
\\ & \hspace{.2cm} -b(x)\cdot g(x)\nabla f(x) - \int_{y\neq0} \left(f(x+y) g(x)- f(x)g(x) - y\cdot g(x) \nabla f(x)\mathbbm{1}_{(0,1)}(|y|) \right)\nu(x,dy)
\\ & \hspace{.2cm} -b(x)\cdot f(x) \nabla g(x)-  \int_{y\neq0} \left(f(x) g(x+y)- f(x)g(x) -y\cdot f(x)\nabla g(x) \mathbbm{1}_{(0,1)}(|y|) \right)\nu(x,dy)
\\ & = \int_{y\neq0} \left(f(x+y)g(x+y) - f(x+y) g(x) - f(x)g(x+y) +f(x)g(x) \right)\nu(x,dy)
\\ & = \int_{y\neq0} \left(f(x+y)- f(x))(g(x+y) - g(x)\right)\nu(x,dy)
\\ & = \int_{\R_+^d} \left(f(x+y)- f(x))(g(x+y) - g(x)\right)\nu(x,dy) 
\\ & \hspace{.2cm} + \int_{\R_-^d} \left(f(x+y)- f(x))(g(x+y) - g(x)\right)\nu(x,dy)
\\ & \geq0,
\end{align*}
where the drift terms and the cut-off term in the integrand vanish because $\nabla fg(x) = f(x) \nabla g(x) + g(x) \nabla f(x)$. Additionally, we get positivity at the end there because $\forall y\in\R_+^d$, $f(x+y) - f(x)\geq0$, and $g(x+y)-g(x)\geq0$, so $(f(x+y) - f(x))(g(x+y) - g(x))\geq0$ on $\R_+^d$. A similar result holds on $\R_-^d$. By Theorem \ref{extliggett}, this implies $T_t fg(x) \geq T_t f(x) T_tg(x)$, where $f,g\in C_b^2(\R^d)\cap \mcl{F}_i$.
Now to obtain association of $X_t$, this inequality needs to hold for all $f,g\in C_b(\R^d)\cap \mcl{F}_i$  But we can use an approximation of a function $f\in C_b(\R^d)\cap \mcl{F}_i$ by $f_n\in C_b^\infty(\R^d)\cap \mcl{F}_i$ which gives us the desired result. 

\par
$(\Rightarrow)$. Assume $X_t$ is associated for all $t\geq0$. This means $T_t fg(x) \geq T_tf(x) T_tg(x)$ for all $x\in\R^d$, for all $f,g\in C_b(\R^d)\cap \mcl{F}_i$. So this inequality of course holds for $f,g\in C_b^2(\R^d)\cap \mcl{F}_i$, which yields $I(p)fg \geq g I(p) f+ f I(p) g$ for such $f,g$ by Theorem \ref{extliggett}. This implies, by a similar calculation in the $(\Leftarrow)$ direction, that $$\int_{y\neq0} (f(x+y)-f(x))(g(x+y)-g(x))\nu(x,dy)\geq0.$$ 

For simplicity, assume $d=2$, but know that we can easily generalize this result to higher dimensions using correction functions.  Fix $x=(x_1,x_2)\in\R^2$. Assume for contradiction that Resnick's condition is not satisfied. WLOG, let's say $\nu(x,(0,\infty)\times (-\infty,0))>0$. By continuity of measure, $\exists a>0$ such that $\nu(x,(a,\infty)\times (-\infty,a))>0$. Let $\epsilon\in(0,1)$, and define $f,g\in C_b^\infty(\R^2)\cap \mcl{F}_i$ by 
\begin{equation*}
 f(y_1,y_2)= \begin{cases} 
      0 & \textrm{ if \hspace{.1cm} $y_1\leq x_1+\epsilon a$} \\
      1 & \textrm{ if \hspace{.1cm} $y_1\geq x_1+ a,$} \\
   \end{cases} \quad \quad \quad
 g(y_1,y_2)= \begin{cases} 
      0 & \textrm{ if \hspace{.1cm} $y_2\geq x_2-\epsilon a$} \\
      -1 & \textrm{ if \hspace{.1cm} $y_2\leq x_2-a.$} \\
   \end{cases} 
\end{equation*}
\noindent This implies $f(x)=g(x)=0$. Hence,
\begin{align*}
0 & \leq \int_{y\neq0} (f(x+y)- f(x))(g(x+y)-g(x)) \nu(x,dy)
\\ & = \int_{y\neq0} f(x+y)g(x+y) \nu(x,dy)
\\& = \int_{(a,\infty)\times (-\infty,-a)} f(x+y)g(x+y) \nu(x,dy)  + \int_{ (a,\infty)\times [-a,-\epsilon a]} f(x+y)g(x+y) \nu(x,dy)
\\ & \hspace{.2cm} + \int_{ [\epsilon a,a] \times (-\infty,-a)} f(x+y)g(x+y) \nu(x,dy) + \int_{ [\epsilon a,a] \times [-a,-\epsilon a]} f(x+y)g(x+y) \nu(x,dy)
\\ & = -\nu(x, (a,\infty)\times (-\infty,-a)) - \int_{(a,\infty)\times [-a,-\epsilon a]} g(x+y) \nu(x,dy)
\\ & \hspace{.2cm} +\int_{ [\epsilon a,a] \times (-\infty,-a)} f(x+y)\nu(x,dy)  + \int_{[\epsilon a,a] \times [-a,-\epsilon a]} f(x+y)g(x+y) \nu(x,dy)
\\ &\leq -\nu(x, (a,\infty)\times (-\infty,-a)),
\end{align*}   
implying $\nu(x,(a,\infty)\times (-\infty,-a)) \leq0$. Hence, $\nu(x,(a,\infty)\times (-\infty,-a)) =0$, a contradiction. 
\end{proof}

\subsection{PUOD implies  condition (\ref{resnickx})}

\begin{lemma}
\label{PODsub}
If $Y= (Y_1,...,Y_d)$ is PUOD, then $(Y_{k_1},...,Y_{k_n})$ is PUOD for all multi-indices $\{k_j\}_{j=1}^n\subset \{1,...,d\}$. 
\end{lemma}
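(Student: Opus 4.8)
The plan is to derive the PUOD inequality for the subvector from the PUOD inequality for the full vector by sending the thresholds on the discarded coordinates off to $-\infty$. Since the definition of PUOD is symmetric in the coordinates (both the intersection and the product are symmetric), PUOD is invariant under relabeling, so without loss of generality I would assume the multi-index is $\{k_1,\ldots,k_n\}$ with complement $\{k_{n+1},\ldots,k_d\}$ inside $\{1,\ldots,d\}$. It then suffices to verify the defining inequality for $(Y_{k_1},\ldots,Y_{k_n})$ at an arbitrary threshold vector $(s_1,\ldots,s_n)\in\R^n$.

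First I would fix $s_1,\ldots,s_n$ and, for each $M>0$, build a full threshold vector $t=t(M)\in\R^d$ whose $k_j$-th entry equals $s_j$ for $j\le n$ and equals $-M$ for $j>n$. Applying the PUOD hypothesis for $Y$ at $t(M)$ gives
$$\dP\Big(\bigcap_{j=1}^n \{Y_{k_j} > s_j\} \cap \bigcap_{j=n+1}^d \{Y_{k_j} > -M\}\Big) \ge \prod_{j=1}^n \dP(Y_{k_j} > s_j)\prod_{j=n+1}^d \dP(Y_{k_j} > -M).$$
Next I would let $M\to\infty$. The events $\{Y_{k_j} > -M\}$ increase to the whole space as $M$ grows, so by continuity of probability from below the left-hand side converges to $\dP\big(\bigcap_{j=1}^n \{Y_{k_j} > s_j\}\big)$, while each discarded factor satisfies $\dP(Y_{k_j} > -M)\to 1$, so the right-hand side converges to $\prod_{j=1}^n \dP(Y_{k_j} > s_j)$. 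Because the inequality holds for every $M$, it is preserved in the limit, which is precisely the PUOD inequality for $(Y_{k_1},\ldots,Y_{k_n})$ at $(s_1,\ldots,s_n)$.

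There is essentially no serious obstacle here; the only point needing minor care is the interchange of the limit with the probabilities, and that is justified purely by monotone continuity of measure, since the intersected events increase as $M$ grows and the marginal factors converge monotonically to $1$. I would note in passing that the identical argument proves the analogous statement for PLOD, using the events $\{Y_i \le t_i\}$ and thresholds $+M$ on the discarded coordinates, should that be needed elsewhere.
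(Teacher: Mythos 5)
Your proof is correct. It takes a mildly different route from the paper: you work directly from the orthant-probability form of the definition (Definition \ref{def:puod}), set the thresholds on the discarded coordinates to $-M$, and pass to the limit $M\to\infty$ using continuity of measure from below on the left and $\dP(Y_{k_j}>-M)\to1$ on the right. The paper instead invokes the equivalent functional characterization of PUOD stated in Definition \ref{def:pod} --- $\E\bigl(\prod_{i=1}^d f_i(Y_i)\bigr)\geq\prod_{i=1}^d\E f_i(Y_i)$ for non-negative non-decreasing $f_i$ --- and simply sets $f_i=\mathbbm{1}_{\R}$ (the constant function $1$) on the discarded coordinates, which kills those factors with no limiting argument at all. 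The paper's version is a one-line substitution but relies on the equivalence of the two formulations of PUOD; yours is marginally longer but needs only the primary definition and elementary measure-theoretic continuity, and, as you note, transfers verbatim to PLOD with thresholds $+M$. Both are complete and correct.
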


\begin{proof}
If $Y$ PUOD, then we know $\E \left(\prod_{i=1}^d f_i (Y_i) \right) \geq \prod_{i=1}^d \E f_i (Y_i)$ where $f_i:\R\rightarrow\R_+$ non-decreasing. So for all $i\in\{1,...,d\} \setminus \{k_j\}_{j=1}^n$, set $f_i = \mathbbm{1}_{\R}$. Then the above inequality becomes 
\begin{center}
$\E \left(\prod_{j=1}^n f_j (Y_{k_j}) \right) \geq \prod_{j=1}^n \E f_j (Y_{k_j}).$
\end{center}
Thus, we have that $(Y_{k_1},...,Y_{k_n})$ is PUOD.
\end{proof}




\begin{theorem}
\label{PODnec}
Let $X = (X_t)_{t\geq0}$ be a rich Feller process with symbol $p(x,\xi)$ and triplet $(b(x), 0, \nu(x,dy))$. Then, $X_t$ is  PUOD for each $t\geq0$ implies condition (\ref{resnickx}): 
\begin{center}
$\nu(x,(\R_+^d\cup \R_-^d)^c)=0.$
\end{center}
\end{theorem}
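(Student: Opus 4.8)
The plan is to fix a state $x\in\R^d$ and show that the state-space dependent \Levy measure $\nu(x,\cdot)$ assigns no mass to each ``mixed'' region in which one coordinate is strictly positive and another strictly negative; since $(\R_+^d\cup\R_-^d)^c=\bigcup_{i\neq j}\{z:z_i>0,\,z_j<0\}$ is a finite union of such regions, this suffices. First I would reduce to two coordinates: by Lemma \ref{PODsub}, for any pair $i\neq j$ the projection $(X_t^{(i)},X_t^{(j)})$ is PUOD, so for all real $s,u$ we have $\dP^x(X_t^{(i)}>s,\,X_t^{(j)}>u)\geq\dP^x(X_t^{(i)}>s)\,\dP^x(X_t^{(j)}>u)$. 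Complementing the second coordinate turns this into the mixed-quadrant inequality $\dP^x(X_t^{(i)}>s,\,X_t^{(j)}\leq u)\leq\dP^x(X_t^{(i)}>s)\,\dP^x(X_t^{(j)}\leq u)$.

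Next I would feed this inequality into the small-time asymptotics. Fix $a>0$, set $s=x_i+a$ and $u=x_j-a$, divide by $t$, and let $t\searrow0$. On the left, the event is $\{X_t-x\in A\}$ with $A=\{z:z_i>a,\,z_j\leq -a\}$, a set bounded away from the origin and contained in the mixed region; by the Portmanteau part of Theorem \ref{kuhn} (choosing $a$ so that $\nu(x,\partial A)=0$), the left side converges to $\nu(x,A)$. On the right I would split the product: the factor $\frac1t\dP^x(X_t^{(i)}-x_i>a)$ converges to the finite value $\nu(x,\{z_i>a\})$, again by Theorem \ref{kuhn}, while the remaining factor $\dP^x(X_t^{(j)}-x_j\leq -a)$ tends to $0$ because a Feller process is stochastically continuous, so $X_t\to x$ in $\dP^x$-probability and hence $\dP^x(|X_t^{(j)}-x_j|\geq a)\to0$. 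The product therefore vanishes in the limit, yielding $\nu(x,A)\leq0$, i.e. $\nu(x,\{z_i>a,\,z_j\leq -a\})=0$.

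Finally I would remove the cutoff and assemble the pieces. Choosing a sequence $a_n\searrow0$ that avoids the at most countably many values for which the relevant hyperplanes carry $\nu(x,\cdot)$-mass, the sets $\{z_i>a_n,\,z_j\leq -a_n\}$ increase to $\{z_i>0,\,z_j<0\}$, so continuity of measure gives $\nu(x,\{z_i>0,\,z_j<0\})=0$; the symmetric choice of cutoffs (or swapping the roles of $i$ and $j$) handles $\{z_i<0,\,z_j>0\}$. Taking the finite union over all pairs $i\neq j$ then produces $\nu(x,(\R_+^d\cup\R_-^d)^c)=0$, as required. I expect the main obstacle to be the bookkeeping in the small-time limit: verifying that the sets are genuinely bounded away from $0$ so that Theorem \ref{kuhn} applies, choosing $a$ to kill the boundary term $\nu(x,\partial A)$, and---the real crux---recognizing that the right-hand side must be split into a factor with a finite $\tfrac1t$-rescaled limit and a factor that vanishes by stochastic continuity, rather than attempting to rescale both factors at once. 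Note that, consistent with the statement, no stochastic monotonicity is used anywhere in this argument.
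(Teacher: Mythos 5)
Your proposal is correct and follows essentially the same route as the paper: reduce to coordinate pairs via Lemma \ref{PODsub}, turn the PUOD inequality into a bound on the mixed-quadrant probability, and apply Theorem \ref{kuhn} so that the $\tfrac1t$-rescaled left side converges to $\nu(x,\cdot)$ of the mixed set while the right side vanishes because one factor has a finite rescaled limit and the other tends to zero by stochastic continuity. The only differences are cosmetic --- you argue directly with two-coordinate slabs $\{z_i>a,\,z_j\leq -a\}$ and a union over pairs, whereas the paper runs a contradiction on a full signed orthant $(a,\infty)^{d-1}\times(-\infty,-a)$ --- and your exhaustion of $(\R_+^d\cup\R_-^d)^c$ by the sets $\{z_i>0,\,z_j<0\}$ is, if anything, slightly more careful.
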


\begin{proof}
Assume $X_t$ is PUOD (wrt $\dP^x$) for each $t\geq0$.  Fix $x=(x_1,...,x_d)\in\R^d$. Since $X_t$ is PUOD, then $X_t-x$ is PUOD for all $t\geq0$. Assume for contradiction that $\nu$ not concentrated on $\R_+^d\cup \R_-^d$. WLOG, say $\nu(x,(0,\infty)^{d-1} \times (-\infty,0))>0$. By continuity of measure 
there exists $a>0$ such that 
\begin{center}
$\nu(x,(a,\infty)^{d-1}\times (-\infty,-a))>0$
\end{center}
 and 
 \begin{center}
 $\nu(x, \partial[(a,\infty)^{d-1}\times (-\infty,-a)])=  \nu(x,\partial[(a,\infty)\times \R^{d-1}])=0.$
 \end{center}
Then by Theorem \ref{kuhn}, 
\begin{center}$\lim_{t\rightarrow0} \frac{1}{t} \dP^x( X_t - x\in (a,\infty)^{d-1} \times (-\infty,-a)) = \nu(x, (a,\infty)^{d-1} \times (-\infty,-a)).$\end{center}
Hence,
\begin{align*}
0&<\nu(x, (a,\infty)^{d-1}\times (-\infty,-a))
\\ & = \lim_{t\rightarrow0}\frac{1}{t} \dP^x(X_t - x \in (a,\infty)^{d-1}\times (-\infty,-a))
\\ & = \lim_{t\rightarrow0}\frac{1}{t} \dP^x(X_t^{(1)} - x_1>a ,...,X_t^{(d-1)} - x_{d-1}>a, X_t^{(d)} - x_d <-a)
\\ & \leq \lim_{t\rightarrow0}\frac{1}{t} \dP^x(X_t^{(1)} - x_1>a ,...,X_t^{(d-1)} - x_{d-1}>a, X_t^{(d)} - x_d \leq-a)
\\ & = \lim_{t\rightarrow0}\frac{1}{t} \dP^x( \{X_t^{(1)} - x_1>a\}\setminus [\{X_t^{(1)} - x_1>a\} \cap  \{X_t^{(2)} - x_2 >a,..., X_t^{(d)} - x_d \leq-a\}^c])
\\ & = \lim_{t\rightarrow0}\frac{1}{t} \left[ \dP^x( X_t^{(1)} - x_1>a) \right.
\\ & \hspace{1cm} \left. - \dP^x (\{X_t^{(1)} - x_1>a\} \cap  \{X_t^{(2)} - x_2 >a,..., X_t^{(d)} - x_d \leq-a\}^c ) \right]
\\ & = \lim_{t\rightarrow0}\frac{1}{t} \left[\dP^x( X_t^{(1)} - x_1>a) \right.
  \\ & \hspace{1cm}  - \left. \dP^x(\{X_t^{(1)} - x_1>a\} \cap  [ \{X_t^{(2)} - x_2 \leq a\}\cup...\cup  \{X_t^{(d)} - x_d >-a\}]]) \right]
\\ & =  \lim_{t\rightarrow0}\frac{1}{t} \left[ \dP^x(X_t^{(1)} - x_1>a) \right.
\\ &  \hspace{1cm}  - \left. \dP^x( \{X_t^{(1)} - x_1>a, X_t^{(2)} - x_2 \leq a\}\cup...\cup  \{X_t^{(1)} - x_1>a, X_t^{(d)} - x_d >-a\}]) \right]
\\ & \leq \lim_{t\rightarrow0}\frac{1}{t} \left[ \dP^x(X_t^{(1)} - x_1>a)  -  \dP^x(   X_t^{(1)} - x_1>a, X_t^{(d)} - x_d >-a]) \right]
\\ & \leq \lim_{t\rightarrow0}\frac{1}{t} \left[ \dP^x(X_t^{(1)} - x_1>a)  -  \dP^x(   X_t^{(1)} - x_1>a)\dP^x(X_t^{(d)} - x_d >-a]) \right]
\\ & = \lim_{t\rightarrow0}\frac{1}{t} \left[ \dP^x(X_t^{(1)} - x_1>a)(1  - \dP^x(X_t^{(d)} - x_d >-a]))\right]
\\ & = \lim_{t\rightarrow0}\frac{1}{t} \dP^x(X_t^{(1)} - x_1>a)\dP^x(X_t^{(d)} - x_d \leq -a])
\\ & = \left[\lim_{t\rightarrow0}\frac{1}{t} \dP^x(X_t^{(1)} - x_1>a)\right] \left[\lim_{t\rightarrow0}\dP^x(X_t^{(d)} - x_d \leq -a])\right]
\\ & = \nu(x, (a,\infty)\times \R^{d-1}) \dP^x( X_0^{(d)} - x_d \leq -a)
\\ & = 0.
\end{align*}
We obtain lines 4, 9 by set containment, line 5 by the fact: $A\cap B = A\setminus (A\cap B^c)$, line 10 by Lemma \ref{PODsub}, and line 14 by Theorem \ref{kuhn}. This contradiction gives us the desired result.
\end{proof}

\begin{remark}
{\rm 
\begin{enumerate}[noitemsep, label=(\roman*)]
\item We could have also showed PLOD implies condition \eqref{resnickx} using similar techniques to those above.
\item Symbol $p(x,\xi)$ in the above theorem need not be bounded, only locally bounded. 
\end{enumerate}
}
\end{remark}

\begin{corollary}
For stochastically monotone jump-Feller processes, i.e. $X\sim (b(x), 0, \nu(x,dy))$ with bounded symbols $p(x,\xi)$, then condition (\ref{resnickx}), $\nu(x,(\R_+^d\cup\R_-^d)^c)=0$, is equivalent to $X$ being associated, WA, PSA, PSD, POD, PUOD, and PLOD in space.

\end{corollary}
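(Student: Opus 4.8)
The plan is to close a cycle of implications around condition (\ref{resnickx}), assembling the equivalence from three ingredients already in hand: the characterization of association in Theorem \ref{resnickxthm}, the implication hierarchy of Proposition \ref{propdepmap}, and the necessity result of Theorem \ref{PODnec} (together with the remark following it).

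First I would establish the forward direction, that condition (\ref{resnickx}) implies each listed form of positive dependence in space. The corollary's standing hypotheses---stochastic monotonicity and a bounded symbol---are precisely those of Theorem \ref{resnickxthm}, which gives that (\ref{resnickx}) is equivalent to $X_t$ being associated for every $t\geq 0$, i.e. to spatial association. Since association is the strongest notion in the hierarchy, I would then fix $t$ and apply Proposition \ref{propdepmap} to the random vector $X_t$ under $\dP^x$: this yields that $X_t$ is simultaneously WA, PSA, PSD, POD, PUOD, and PLOD. Reading the conclusion over all $t\geq 0$ promotes each vector property to its spatial analogue, so $X$ has every listed property in space.

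Next I would prove the reverse direction, that each listed property forces (\ref{resnickx}). Here I would use that PUOD and PLOD lie at the bottom of the implication map. By Proposition \ref{propdepmap}, each of A, WA, PSA, PSD, POD implies POD, hence PUOD, in space; Theorem \ref{PODnec} then gives (\ref{resnickx}) for each of these. The lone property that does not dominate PUOD is PLOD, and for it the remark (i) following Theorem \ref{PODnec} supplies the symmetric conclusion PLOD $\Rightarrow$ (\ref{resnickx}). Thus every one of the seven properties implies (\ref{resnickx}). (Note that Theorem \ref{PODnec} needs only local boundedness of the symbol, which is implied by the boundedness assumed here, so there is no hypothesis mismatch.)

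Combining the two directions closes the loop: (\ref{resnickx}) $\Rightarrow$ A $\Rightarrow$ (any of the seven) $\Rightarrow$ PUOD or PLOD $\Rightarrow$ (\ref{resnickx}), whence all are mutually equivalent. The only step needing explicit care is the passage from the random-vector implications of Proposition \ref{propdepmap} to their spatial versions; this is immediate after fixing $t$ and applying the vector statement to $X_t$, but I would state it plainly. I do not anticipate a genuine obstacle---the substantive analytic work was carried out in Theorems \ref{resnickxthm} and \ref{PODnec}, and this corollary is essentially a bookkeeping assembly of those results along the dependence hierarchy.
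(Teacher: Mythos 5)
Your proposal is correct and matches the paper's argument: the paper proves this corollary by exactly the same assembly, citing Theorem \ref{resnickxthm} for the equivalence with association, Theorem \ref{PODnec} (and the remark covering PLOD) for necessity from the weakest properties, with the intermediate implications supplied by the hierarchy of Proposition \ref{propdepmap}. The paper simply states ``True by Theorems \ref{resnickxthm} and \ref{PODnec}''; your write-up is a faithful expansion of that one-line proof.
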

\begin{proof}
True by Theorems \ref{resnickxthm} and \ref{PODnec}.
\end{proof}

\subsection{Association in time}

Our results can also be applied to study the temporal association of Feller processes. 
 We first examine the case of \Levy processes, a sub-class of Feller processes with constant characteristic triplet $(b,Q,\nu)$. For \Levy processes, spatial association is equivalent to temporal association. 

\begin{theorem}
\label{levytime}
Let $X=(X_t)_{t\geq0}$ be a stochastic process in $\R^d$ with independent and stationary increments, i.e. $X_t - X_s \independent X_s - X_r$, for all $0\leq r<s<t$, and $X_t-X_s \stackrel{d}=X_{t-s}$ for all $0\leq s<t$. Then $X$ is associated in time if and only if $X$ is associated in space.
\end{theorem}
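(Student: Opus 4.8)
The plan is to treat the two directions separately. The implication ``associated in time $\Rightarrow$ associated in space'' is immediate and was already noted in the remark after Definition~\ref{def:assocspacetime}: specializing the definition of temporal association to $n=1$, $t_1=t$, shows that each $X_t$ is associated, which is precisely spatial association. So the real content is the reverse implication, and there I would exploit the increment structure granted by the hypotheses.

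Fix $0\leq t_1<\cdots<t_n$ and set $t_0=0$. First I would introduce the increments $Y_k := X_{t_k}-X_{t_{k-1}}$ for $k=1,\dots,n$. By the independent-increments hypothesis the $\R^d$-valued vectors $Y_1,\dots,Y_n$ are mutually independent, and by the stationary-increments hypothesis each satisfies $Y_k \stackrel{d}{=} X_{t_k-t_{k-1}}$ (for $k=1$ this uses $s=0$, so $Y_1=X_{t_1}-X_0\stackrel{d}{=}X_{t_1}$). Spatial association says that $X_{t_k-t_{k-1}}$ is associated as a random vector in $\R^d$, and since association is a distributional property, each $Y_k$ is associated as well.

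Next I would invoke the two standard closure properties of association due to Esary, Proschan and Walkup \cite{Esary1967}: (i) the union of mutually independent families of associated random variables is jointly associated, and (ii) coordinatewise non-decreasing functions of an associated vector are again associated. Applying (i) to the mutually independent, individually associated blocks $Y_1,\dots,Y_n$ shows that the concatenated vector $(Y_1,\dots,Y_n)\in\R^{dn}$ is associated. Since $X_{t_k}=X_0+\sum_{j=1}^k Y_j$ with $X_0$ the deterministic starting point, the map $(Y_1,\dots,Y_n)\mapsto(X_{t_1},\dots,X_{t_n})$ is a deterministic shift composed with partial summation; each output coordinate is a non-decreasing function of the inputs. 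Property (ii) then yields that $(X_{t_1},\dots,X_{t_n})$ is associated, i.e.\ $X$ is associated in time.

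The argument is essentially bookkeeping once the correct closure properties are in hand, and I expect no genuine analytic obstacle: the partial-sum structure automatically supplies the monotone maps needed for step (ii). The only points requiring care are citing and applying the closure properties correctly—especially that independence of associated blocks yields joint association—and checking that the increment decomposition respects the deterministic initial value $X_0$, which is harmless since shifting by a constant and independence from the increments both preserve association.
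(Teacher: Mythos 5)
Your proposal is correct and follows essentially the same route as the paper: both reduce to the increment decomposition, use stationarity to transfer spatial association to the increments, invoke independence of the increments, and conclude via the standard Esary--Proschan--Walkup closure properties (the paper phrases the last step as ``a sum of independent associated blocks is associated,'' which is exactly your combination of closure under independent concatenation and under non-decreasing maps). No gaps.
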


\begin{proof}
The forward direction is trivial by definition. We only need to prove the backward direction. Assume $X_t$ is associated in $\R^d$ for every $t\geq0$. Choose $0\leq t_1<...<t_n$. Then 
\begin{align*}
(X_{t_1},...,X_{t_n}) & = (X_{t_1}, X_{t_1} + (X_{t_2} - X_{t_1}), ..., X_{t_1} + (X_{t_2} - X_{t_1}) +...+(X_{t_n} - X_{t_{n-1}}))
\\ & = (X_{t_1},...,X_{t_1}) + (0, X_{t_2} - X_{t_1},...,X_{t_2}-X_{t_1}) + ... + (0,...,0,X_{t_n} - X_{t_{n-1}})
\end{align*}
Now observe that by stationary increments, $X_{t_{k+1}} - X_{t_{k}} \stackrel{d}= X_{t_{k+1} - t_{k}}$ and   $X_{t_{k+1} - t_{k}}$ is associated, which makes $X_{t_{k+1}} - X_{t_{k}}$ associated (association is preserved under equality in distribution), for all $k\in\{1,...,n-1\}$. Now observe that if $\hat{X}$ is associated in $\R^d$, then each block $(0,...0,\hat{X},...,\hat{X})$ is associated in $\R^{dn}$, where there are a $k$ number of $0$ vectors  and $(n-k)$ $\hat{X}$ vectors. Therefore, each block $(0,...,0, X_{t_{k+1}} - X_{t_k},...,X_{t_{k+1}} - X_{t_k})$ is associated, for each $k\in\{1,...,n-1\}$. By independent increments, each block is independent. Therefore, since the sum of independent random vectors, each of which is associated, is associated, then $(X_{t_1},...,X_{t_n})$ is associated. 
\end{proof}

\begin{corollary}
Any \Levy process $X$ that is associated in space is also associated in time. Additionally, if $X$ has triplet $(b,0,\nu)$, then $X$ is associated in time if and only if \\ $\nu( (\R_+^d\cup\R_-^d)^c)=0$.
\end{corollary}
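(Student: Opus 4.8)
The plan is to deduce both assertions directly from the two equivalences already established in this section, exploiting the fact that a \Levy process is precisely a Feller process with constant characteristic triplet and, in particular, is a process with independent and stationary increments. First I would dispatch the opening assertion, which is immediate: a \Levy process satisfies the hypotheses of Theorem \ref{levytime} verbatim, so spatial association forces temporal association with no further work.

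For the stated equivalence I would again use Theorem \ref{levytime} to collapse the question from temporal to spatial association, and then invoke Theorem \ref{resnickxthm}. The step that genuinely requires justification is verifying that a \Levy process with triplet $(b,0,\nu)$ meets the hypotheses of Theorem \ref{resnickxthm}. Three points need checking. First, it is a rich Feller process whose generator restricted to $C_c^\infty(\R^d)$ is the pseudo-differential operator with symbol $-p(\xi)$; this is classical. Second, it is stochastically monotone: since $T_t f(x) = \E^x f(X_t) = \E f(x+L_t)$ for $L$ the process started at the origin, componentwise monotonicity of $f$ is preserved pathwise and hence after taking expectations, so $T_t f \in \mcl{F}_i$ whenever $f \in \mcl{F}_i$. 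Third, the symbol is bounded in the sense of \eqref{locbdd} with $K=\R^d$: because $p(x,\xi)=p(\xi)$ does not depend on $x$, the standard quadratic growth estimate $|p(\xi)| \le c(1+|\xi|^2)$ for continuous negative definite functions furnishes a single constant $c$ with $\sup_{x\in\R^d}|p(x,\xi)| \le c(1+|\xi|^2)$. The continuity of $x \mapsto p(x,0)$ required in Theorem \ref{extliggett} is trivial, the map being constant.

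Having verified these hypotheses, I would apply Theorem \ref{resnickxthm}: condition \eqref{resnickx}, which for an $x$-independent \Levy measure reduces to the single requirement $\nu((\R_+^d\cup\R_-^d)^c)=0$, is then equivalent to association of $X_t$ for all $t\geq0$, i.e.\ to spatial association. Combining this with the reduction afforded by Theorem \ref{levytime} yields that temporal association is equivalent to $\nu((\R_+^d\cup\R_-^d)^c)=0$, as claimed.

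The only real obstacle is the bounded-symbol hypothesis of Theorem \ref{resnickxthm}, and here it is genuinely mild precisely because the symbol is constant in the state variable. If one wished to bypass Theorem \ref{resnickxthm} altogether, the necessity direction could instead be recovered from Theorem \ref{PODnec} (association implies PUOD by Proposition \ref{propdepmap}, and Theorem \ref{PODnec} needs only local boundedness of the symbol), while the sufficiency direction is the classical theorem of Samorodnitsky \cite{Samorodnitsky1995} obtained as a special case; but the route through Theorem \ref{resnickxthm} is cleaner and keeps the argument self-contained within the present framework.
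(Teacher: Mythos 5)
Your proposal is correct and follows essentially the same route as the paper: the paper's own proof simply cites Theorem \ref{levytime} (using independent and stationary increments) to reduce temporal to spatial association, with the equivalence to $\nu((\R_+^d\cup\R_-^d)^c)=0$ then coming from Theorem \ref{resnickxthm} applied to the constant-triplet case. Your additional verification of the hypotheses of Theorem \ref{resnickxthm} (stochastic monotonicity, boundedness of the $x$-independent symbol, continuity of $x\mapsto p(x,0)$) is a careful elaboration of what the paper leaves implicit, and the alternative necessity argument via Theorem \ref{PODnec} is a valid but unnecessary detour.
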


\begin{proof}
Any \Levy process has independent and stationary increments, thus the result holds by Theorem \ref{levytime}.
\end{proof}

We would also like to consider conditions for temporal association of general Feller processes. Early work on this has been done by Harris \cite[Cor.1.2]{Harris1977} and Liggett \cite[p.82]{Liggett1985} for Feller processes with a countable state space. This can be extended to more general state spaces, as given in the following theorem. 

\begin{theorem}
\label{thm:liggetttime}
Let $X= (X_t)_{t\geq0}$ be a time-homogeneous, stochastically monotone Feller process on $\R^d$. If $X$ is spatially associated, and $X_0\sim \mu$, where $\mu$ satisfies 
\begin{equation*}
\label{assocmeasure}
\int fg \hspace{.1cm} d\mu \geq \int f \hspace{.1cm} d\mu \int g  \hspace{.1cm} d\mu, \hspace{.5cm} f,g\in B_b(\R^d) \cap \mcl{F}_i,
\end{equation*}
then $X$ is temporally associated.
\end{theorem}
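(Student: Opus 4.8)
The plan is to prove the stronger statement that the full skeleton chain $(X_0, X_{t_1}, \dots, X_{t_n})$ is associated in $\R^{d(n+1)}$ for every $0 < t_1 < \dots < t_n$; temporal association of $(X_{t_1}, \dots, X_{t_n})$ then follows immediately, since any subvector of an associated random vector is associated (take test functions not depending on the omitted coordinates). I would argue by induction on the number of time points, using the Markov property of $X$ together with the two structural hypotheses: spatial association, which says that for each $z$ the transition law $T_s(z,\cdot) = \dP^z(X_s\in\cdot)$ is associated, and stochastic monotonicity, which says each $T_s$ maps $\mcl{F}_i$ into $\mcl{F}_i$, equivalently that $z\mapsto T_s(z,\cdot)$ is stochastically increasing. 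The base case is exactly the hypothesis that $X_0\sim\mu$ is associated, since the displayed inequality for $\mu$ is the definition of association tested against bounded non-decreasing functions.

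For the inductive step, set $s = t_n - t_{n-1}$ and fix bounded non-decreasing $F,G$ on $\R^{d(n+1)}$. By the Markov property the conditional law of $X_{t_n}$ given $\mcl{G}_{t_{n-1}}$ is $T_s(X_{t_{n-1}},\cdot)$, and $X_0,\dots,X_{t_{n-1}}$ are $\mcl{G}_{t_{n-1}}$-measurable, so I apply the law of total covariance
\[
\Cov(F, G) = \E\big[\Cov(F, G \mid \mcl{G}_{t_{n-1}})\big] + \Cov\big(\E[F \mid \mcl{G}_{t_{n-1}}], \, \E[G \mid \mcl{G}_{t_{n-1}}]\big).
\]
Conditionally on $\mcl{G}_{t_{n-1}}$, both $F$ and $G$ are non-decreasing functions of the single remaining variable $X_{t_n}$, whose conditional law $T_s(X_{t_{n-1}},\cdot)$ is associated by spatial association; hence the inner covariance is non-negative and the first term is $\ge 0$.

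The second term is where the real work lies. Writing $\tilde F(y_0,\dots,y_{n-1}) = \int F(y_0,\dots,y_{n-1},w)\, T_s(y_{n-1},dw)$ and likewise $\tilde G$ for the conditional expectations, I need $\tilde F,\tilde G$ to be non-decreasing in all their arguments so that the inductive hypothesis — that $(X_0,\dots,X_{t_{n-1}})$ is associated — yields $\Cov(\tilde F,\tilde G)\ge 0$. Monotonicity in $y_0,\dots,y_{n-2}$ is immediate, as $F$ is monotone there and the kernel does not involve those coordinates. The delicate coordinate is $y_{n-1}$, which appears both inside $F$ and in the kernel; for $z\le z'$ I split
\[
\int F(\dots, z, w)\,T_s(z, dw) \le \int F(\dots, z', w)\,T_s(z, dw) \le \int F(\dots, z', w)\,T_s(z', dw),
\]
where the first inequality uses monotonicity of $F$ in its $(n-1)$st slot and the second uses that $w\mapsto F(\dots,z',w)$ is non-decreasing together with $T_s(z',\cdot)\st T_s(z,\cdot)$, i.e. stochastic monotonicity. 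This establishes the key monotonicity of the conditional expectations, and adding the two non-negative terms gives $\Cov(F,G)\ge 0$, closing the induction.

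I expect the only genuine obstacle to be this monotonicity of the integrated kernel $\tilde F$ in the variable $y_{n-1}$, since that is precisely where spatial association alone is insufficient and stochastic monotonicity must be invoked; the covariance decomposition, the existence of regular conditional distributions on the Polish space $\R^d$, the passage from the skeleton chain to its subvector, and the reduction from arbitrary non-decreasing test functions with finite covariance to bounded ones (by the usual monotone truncation argument) are all routine.
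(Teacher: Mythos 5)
Your proposal is correct and is essentially the argument the paper invokes: the paper defers to Liggett's proof (and the author's dissertation), which is precisely this induction on time points via the law of total covariance, with spatial association handling the conditional covariance term and stochastic monotonicity supplying the monotonicity of the conditional expectation in the $y_{n-1}$ slot. No substantive difference in route.
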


The proof is similar to Liggett's proof found in  \cite[p.82]{Liggett1985}. For details on the proof, we refer the reader to the author's dissertation \cite[p.59]{TuThesis}. Theorem \ref{thm:liggetttime} yields the following corollary about jump-Feller processes.

\begin{corollary}
\label{cor:assoctimeFeller}
 Let $X=(X_t)_{t\geq0}$ be a stochastically monotone Feller process with characteristics $(b(x),0,\nu(x,dy))$. Assume $X_0\sim \mu\in\mcl{M}_a$. Then $\nu(x,(\R_+^d \cup \R_-^d)^c)=0$ if and only if $X$ is associated in time.
\end{corollary}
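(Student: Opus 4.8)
The plan is to obtain the equivalence by chaining three results already in hand, so that no genuinely new estimate is needed: Theorems \ref{resnickxthm} and \ref{PODnec} together identify condition (\ref{resnickx}) with \emph{spatial} association, while Theorem \ref{thm:liggetttime} is what converts spatial association into \emph{temporal} association once the initial law is admissible. The work is entirely in deciding which theorem upgrades which notion of dependence and in keeping careful track of the reference measure.

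For sufficiency ($\Rightarrow$), I would assume (\ref{resnickx}) and invoke Theorem \ref{resnickxthm} (reading $X$ as a rich, stochastically monotone Feller process with bounded symbol) to conclude that $X_t$ is associated for every $t\geq0$, i.e. $X$ is spatially associated. The standing hypothesis $X_0\sim\mu\in\mcl{M}_a$ is exactly the admissibility condition on the initial distribution demanded by Theorem \ref{thm:liggetttime}, namely $\int fg\,d\mu\geq\int f\,d\mu\int g\,d\mu$ for $f,g\in B_b(\R^d)\cap\mcl{F}_i$. Since $X$ is time-homogeneous and stochastically monotone, Theorem \ref{thm:liggetttime} then promotes spatial association to temporal association, which is the claim.

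For necessity ($\Leftarrow$), I would run the chain backwards through the weaker structures. If $X$ is associated in time, then specializing Definition \ref{def:assocspacetime}(b) to a single time ($n=1$) shows that each $X_t$ is associated, so $X$ is spatially associated; association sits at the top of the implication map of Proposition \ref{propdepmap} and in particular forces PUOD of every $X_t$, at which point Theorem \ref{PODnec} delivers $\nu(x,(\R_+^d\cup\R_-^d)^c)=0$ for all $x$.

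The subtle point, and the step I would treat with the most care, is the matching of reference measures between these theorems. Theorem \ref{thm:liggetttime} outputs temporal association of the process started from $\mu$ (i.e. under $\dP^\mu$), whereas Theorems \ref{resnickxthm} and \ref{PODnec}, together with the small-time asymptotics of Theorem \ref{kuhn}, are phrased for the deterministically-started laws $\dP^x$ and their pointwise limits. In the necessity direction I would close this gap by reducing to point-mass initial data: every Dirac measure $\delta_x$ lies in $\mcl{M}_a$ (the association inequality holds with equality for a point mass), so applying the equivalence with $\mu=\delta_x$ yields PUOD under each $\dP^x$, and since (\ref{resnickx}) is a pointwise statement about the intrinsic kernel $\nu(x,dy)$, this is precisely the input Theorem \ref{PODnec} consumes. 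Making this reduction explicit, rather than tacitly identifying $\dP^\mu$ with the family $(\dP^x)_{x\in\R^d}$, is where the real care lies.
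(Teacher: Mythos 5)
Your proof is correct and takes essentially the same route as the paper, whose entire argument is to chain Theorem \ref{resnickxthm} (spatial association $\Leftrightarrow$ condition (\ref{resnickx})) with Theorem \ref{thm:liggetttime} (spatial association plus an admissible initial law $\mu\in\mcl{M}_a$ $\Rightarrow$ temporal association). Your detour through PUOD and Theorem \ref{PODnec} in the necessity direction, and your explicit care with $\dP^\mu$ versus $\dP^x$ (reducing to $\mu=\delta_x$), are harmless refinements of what the paper leaves implicit in its one-line proof.
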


\begin{proof}
The proof follows from Theorems \ref{resnickxthm} and  \ref{thm:liggetttime}.
\end{proof}

\section{Examples}
\label{sec:examples}
 
We give a collection of interesting Feller processes that satisfy stochastic monotonicity.

\subsection{\Levy processes}
Any \Levy process satisfies stochastic monotonicity. Let $(T_t)_{t\geq0}$ be a semigroup of a \Levy process. Then, for $f\in\mcl{F}_i$, we have $$T_t f(x) = \E^x f(X_t) = \E^0 f(X_t+x).$$
Thus monotonicity of function $f$ and of the expectation $\E^0$ gives us that $T_t f\in\mcl{F}_i$.
\par

Let  $X = (X_t)_{t\geq0}$  be a jump-\Levy process whose \Levy characteristics look like $(b, 0, \nu)$, where there is no state-space dependence. Then $\nu((\R_+^d \cup \R_-^d)^c)=0$ is equivalent to $X_t$ being associated, WA, PSA, PSD,  POD, PUOD, and PLOD since all \Levy processes are stochastically monotone. This was proven in B\"{a}uerle (2008) \cite{Bauerle2008} for association, PSD, and POD, but not for the other dependence structures. Furthermore, the technique in \cite{Bauerle2008} to prove condition (\ref{resnick}) is equivalent to PSD and POD required \Levy copulas. Our method of short-time asymptotics avoids \Levy copulas altogether, and solely uses the \Levy measure. Additionally, condition \eqref{resnick} is equivalent to temporal association of $X$, by Corollary \ref{cor:assoctimeFeller}.




\subsection{Ornstein-Uhlenbeck process}

An Ornstein-Uhlenbeck (OU) process $X=(X_t)_{t\geq0}$ in $\R^d$ is the solution to the general \textit{Langevin equation}: 
\begin{align*}
dX_t &= -\lambda X_t dt + dL_t
\\ X_0 &=x \text{ a.s.}
\end{align*}

\noindent where $\lambda>0$, $L = (L_t)_{t\geq0}\sim(b_L,\Sigma_L,\nu_L)$ is a \Levy process in $\R^d$, and $x\in\R^d$. Then OU-process looks like:
$$X_t = e^{-\lambda t}x + \int_0^t e^{-\lambda(t-s)} dL_t$$
The semigroup $(T_t)_{t\geq0}$ of this process is called the Mehler semigroup and is given by $$T_t f(x) = \int_{\R^d} f( e^{\lambda t} x + y) \mu_t(dy), \hspace{.5cm} L_t \sim \mu_t$$

\begin{claim}
The OU process is stochastically monotone.
\end{claim}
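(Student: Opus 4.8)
The plan is to verify stochastic monotonicity directly from the explicit form of the Mehler semigroup, exactly in the spirit of the Lévy-process example that precedes this claim. Recall that a process is stochastically monotone if $T_t f \in \mcl{F}_i$ whenever $f \in \mcl{F}_i$, i.e. the semigroup maps non-decreasing functions to non-decreasing functions. So I would fix $f \in \mcl{F}_i$, fix $t \geq 0$, and show that $x \mapsto T_t f(x)$ is non-decreasing in each component.

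First I would write out $T_t f(x) = \int_{\R^d} f(e^{-\lambda t} x + y)\, \mu_t(dy)$, where $\mu_t$ is the law of $L_t$ and $\lambda > 0$. The key structural observation is that the starting point $x$ enters only through the deterministic translation $e^{-\lambda t} x$ inside the argument of $f$, while the integrating measure $\mu_t$ does not depend on $x$ at all. Then I would take $x, x' \in \R^d$ with $x \leq x'$ componentwise. Since $e^{-\lambda t} > 0$, we have $e^{-\lambda t} x \leq e^{-\lambda t} x'$ componentwise, hence $e^{-\lambda t} x + y \leq e^{-\lambda t} x' + y$ for every fixed $y$. Because $f$ is non-decreasing in each component, this gives $f(e^{-\lambda t} x + y) \leq f(e^{-\lambda t} x' + y)$ pointwise in $y$. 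Integrating against the common measure $\mu_t$ and using monotonicity of the integral yields $T_t f(x) \leq T_t f(x')$, which is precisely the statement that $T_t f \in \mcl{F}_i$.

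I do not expect any genuine obstacle here: the argument is essentially the same as the one given for Lévy processes above, with the only new feature being the positive scalar factor $e^{-\lambda t}$, which preserves the componentwise partial order precisely because $\lambda > 0$ forces $e^{-\lambda t} > 0$. (One should note that the excerpt's displayed formula writes $e^{\lambda t} x$ in the Mehler semigroup, but the solution formula gives the coefficient $e^{-\lambda t}$; either sign of the exponent is a strictly positive scalar, so the monotonicity argument is unaffected.) The mildest point of care is that the translation $y \mapsto e^{-\lambda t} x + y$ preserves, rather than reverses, the order, so no flipping of inequalities occurs; and that $\mu_t$ being a fixed probability measure independent of $x$ is what lets us compare the two integrals term by term. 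Thus the claim follows immediately, and I would present it as a two-line verification analogous to the Lévy case.
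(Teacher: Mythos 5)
Your argument is correct and is essentially identical to the paper's own proof: both fix a non-decreasing $f$, use that $e^{-\lambda t}>0$ preserves the componentwise order, and integrate the resulting pointwise inequality against the $x$-independent measure $\mu_t$. Your parenthetical about the sign of the exponent in the displayed Mehler formula is a fair catch of a typo, and it does not affect the argument.
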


\begin{proof}
Let $f\in B_b(\R^d)$ be an increasing function. Assume $x<y$, and fix some $t\geq0$. Then $e^{-\lambda t} x < e^{-\lambda t}y$. This implies $f(e^{-\lambda t} x + z) \leq f(e^{-\lambda t} y + z)$ for all $z\in \R^d$. Hence, 
\begin{center}
$T_t f(x) = \int_{\R^d} f(e^{-\lambda t} x+ z) \mu_t(dz) \leq \int_{\R^d} f(e^{-\lambda t} y+ z) \mu_t(dz) = T_t f(y).$
\end{center}
Thus, $T_t f$ is an increasing function on $\R^d$. 
\end{proof}

Process $X$ has characteristic triplet: $(b_L-\lambda x, \Sigma_L, \nu_L)$ \cite{Applebaum2007}. Thus, the characterization of positive dependence (association, WA, PSA, PSD,  POD, PUOD, PLOD) is equivalent to $\nu_L((\R_+^d \cup \R_-^d)^c)=0$ when $\Sigma=0$.

\subsection{Feller's pseudo-Poisson process}
Here we construct a stochastically monotone pseudo-Poisson process. Let $S=(S(n))_{n\in\N}$ be a homogeneous Markov process taking values in $\R^d$. Let $(q^{(n)})_{n\in\N}$ define the $n$-step transition probabilities:
$$q^{(n)} (x,B) = \dP(S(n)\in B| S(0)=x)$$
for all $B\in \mcl{B}(\R^d)$. Let $Q$ be the \textit{transition operator} of $S$, defined by $$(Q f)(x) = \int_{\R^d} f(y) q(x,dy)$$ for all $f\in B_b(\R^d)$, $x\in\R^d$. Note that $Q^n f(x) = \int_{\R^d} f(y) q^{(n)} (x,dy)$. Let $N = (N_t)_{t\geq0}$ be a Poisson process with rate $\lambda$ that is independent of $S$. Define $X = (X_t)_{t\geq0}$ by subordination:
$$X_t := S(N_t) \text{ for all } t\geq0.$$ Process $X$, called \textit{Feller's pseudo-Poisson process},  is a Feller process.
The semigroup $(T_t)_{t\geq0}$ and generator $\mcl{A}$ of $X$ are given by:
$$T_t f(x) = e^{t[\lambda(Q-I)]} f(x) = e^{-\lambda t}\sum_{n=0}^\infty \frac{(\lambda t)^n}{n!} Q^n f(x),$$

$$\mcl{A} f(x) = \lambda (Q- I) f(x) = \int_{\R^d} [f(y) - f(x)] \lambda q(x,dy)$$


\begin{claim}
If $S$ is a stochastically monotone Markov process, then $X$ is stochastically monotone. 
\end{claim}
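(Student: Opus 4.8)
The plan is to show that stochastic monotonicity of the underlying Markov chain $S$ transfers through the subordination $X_t = S(N_t)$ to the semigroup $(T_t)_{t\geq0}$ of $X$. The key observation is that the subordinate semigroup is expressed as a convergent series of powers of the transition operator $Q$, so it suffices to control how $Q$ interacts with the cone $\mcl{F}_i$ of non-decreasing functions. First I would recall that $S$ being stochastically monotone means precisely that $Qf \in \mcl{F}_i$ whenever $f\in B_b(\R^d)\cap\mcl{F}_i$; this is the defining property of the transition operator preserving monotone functions.

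Next I would argue by induction that $Q^n$ preserves $\mcl{F}_i$ for every $n\in\N$: the base case $n=0$ is trivial since $Q^0 = I$, and the inductive step $Q^{n+1}f = Q(Q^n f)$ follows because $Q^n f\in\mcl{F}_i$ by hypothesis and $Q$ maps $\mcl{F}_i$ into itself. Then, fixing an increasing bounded $f$ and points $x\le y$ componentwise, I would write out
\begin{equation*}
T_t f(x) = e^{-\lambda t}\sum_{n=0}^\infty \frac{(\lambda t)^n}{n!} Q^n f(x)
\end{equation*}
and compare it termwise with $T_t f(y)$. Since each $Q^n f \in\mcl{F}_i$, we have $Q^n f(x)\le Q^n f(y)$ for every $n$, and because the coefficients $e^{-\lambda t}(\lambda t)^n/n!$ are non-negative, summing the inequalities yields $T_t f(x)\le T_t f(y)$. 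This shows $T_t f\in\mcl{F}_i$, which is exactly stochastic monotonicity of $X$.

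The only technical point requiring care is the interchange of the inequality with the infinite sum, but this is benign: the series converges absolutely and uniformly on compact sets (it is dominated by $e^{-\lambda t}\sum_n (\lambda t)^n\|f\|_\infty/n! = \|f\|_\infty$ since $\|Q^n f\|_\infty\le\|f\|_\infty$ by contractivity of $Q$), so termwise comparison of two convergent non-negative-weighted sums is immediate. I do not anticipate a genuine obstacle here; the proof is essentially a direct consequence of the power-series representation of the Mehler-type semigroup together with the elementary fact that non-negative linear combinations of monotone functions remain monotone. The main conceptual step is simply recognizing that monotonicity is preserved under each power $Q^n$ and that the Poisson weights are non-negative.
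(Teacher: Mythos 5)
Your proof is correct and follows essentially the same route as the paper: both arguments reduce to the induction that $Q^n$ preserves $\mcl{F}_i$ (the paper phrases the inductive step via the Chapman--Kolmogorov equations, which is the same as your $Q^{n+1}f = Q(Q^n f)$) and then sum the termwise inequalities against the non-negative Poisson weights. Your added remark on absolute convergence of the series is a harmless extra precaution.
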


\begin{proof}
We will show that for $f\in\mcl{F}_i$, we have $T_t f\in\mcl{F}_i$. Observe that by $S$ stochastically monotone, we have that $q(x,B)$ is monotone function in $x$ for all $B\in\mcl{B}(\R^d)$ monotone set. Additionally, we have for $f\in B_b(\R^d)\cap \mcl{F}_i$, $Q f(x) = \int_{\R^d} f(y) q(x,dy)$ is a monotone function. We  show, by induction, that for all $n$, $G_n: = e^{-\lambda t} \frac{(\lambda t)^n}{n!} Q^n f$ is a non-decreasing function. 
\\ \\
\underline{Base Case}: $\underline{n=0}$: $G_0(x) = e^{-\lambda t} f(x)$ is non-decreasing. $\underline{n=1}$:  $G_1(x)  = e^{-\lambda t} \lambda t \hspace{.1cm} Q f(x) =e^{-\lambda t} \lambda t \int_{\R^d} f(z) q(x,dz) $ is non-decreasing.\\
\underline{Induction Hypothesis}: Assume $G_n(x) = e^{-\lambda t} \frac{(\lambda t)^n}{n!} Q^n f(x) =e^{-\lambda t} \frac{(\lambda t)^n}{n!} \int_{\R^d} f(z) q^{(n)}(x,dz) $ is a non-decreasing function. \\
\underline{Inductive Step}: 
\begin{align*}
G_{n+1}(x)  = e^{-\lambda t} \frac{(\lambda t)^{n+1}}{(n+1)!} Q^{n+1} f(x) & = e^{-\lambda t} \frac{(\lambda t)^{n+1}}{(n+1)!}  \int_{\R^d} f(z) \hspace{.1cm}q^{(n+1)}(x,dz)
\\ & = e^{-\lambda t} \frac{(\lambda t)^{n+1}}{(n+1)!} \int_{\R^d}\left( \int_{\R^d} f(z) \hspace{.1cm} q^{(n)}(y,dz) \right) q(x,dy)
\\ & =:e^{-\lambda t} \frac{(\lambda t)^{n+1}}{(n+1)!}  \int_{\R^d}H(y) q(x,dy)
\end{align*}
where $H(y) = \int_{\R^d} f(z) q^{(n)}(y,dz)$ is a non-decreasing function in $y$ by Induction Hypothesis, and line 2 is obtained by Chapman-Kolmogorov equations. Thus, by Base Case, the integral $\int_{\R^d} H(y) q(x,dy)$ is non-decreasing in $x$.
Hence we get $G_n$ is a non-decreasing function for all $n$. Hence, $T_tf$ is non-decreasing, giving us our desired result.
\end{proof}


\noindent Now to find the characteristic triplet $(b(x), \Sigma(x), \nu(x,dy))$, we consider the generator: 
\begin{align*}
\mcl{A} f(x) & = \int_{\R^d} (f(z) - f(x)) \lambda q(x,dz)=  \int_{\R^d} (f(x+z) - f(x)) \lambda q(x,dz+x)
\\ & = \int_{\R^d} (f(x+z) - f(x)) \lambda \hat{q}(x, dz), \hspace{.1cm} \text{ where } \hat{q}(x,B) := q(x, B+x)
\\ & = \int_{\R^d} (f(x+z) - f(x) - \nabla f(x) \cdot z \chi(z)) \lambda \hat{q}(x, dz) +   \int_{\R^d} \nabla f(x) \cdot z \chi(z) \lambda \hat{q}(x, dz)
\\ & = \int_{\R^d} (f(x+z) - f(x) - \nabla f(x) \cdot z \chi(z)) \lambda \hat{q}(x, dz) 
+   \nabla f(x) \cdot \left(\int_{\R^d}  z \chi(z) \lambda \hat{q}(x, dz) \right).
\end{align*}

\noindent Thus, the \Levy triplet will be $(b(x), \Sigma(x), \nu(x,dy))$, where 
\begin{center}
$b(x)  = \int_{\R^d}  z \chi(z) \lambda \hat{q}(x, dz), \hspace{1cm} \Sigma(x)  = 0,  \hspace{1cm} \nu(x, A)  = \lambda \hat{q}(x,A) = \lambda q(x, A+x).$
\end{center}




\subsection{Bochner's subordination of a Feller process}

Consider a continuous-time Feller process $Y = (Y(t))_{t\geq0}$ with semigroup $(T_t)_{t\geq0}$ and generator $(\mcl{A},\mcl{D}(\mcl{A}))$. Let $N = N_t$ be a subordinator independent of $Y$ with \Levy characteristics $(b,\lambda)$, i.e. has \Levy symbol $\eta(u) = ibu +\int_0^\infty (e^{iuy} - 1)\lambda(dy)$,
where $\E e^{iuN_t} = e^{t\eta(u)}$. Additionally, we can attain  a Laplace transform of the subordinator, $\E e^{-uN_t} = e^{-t\psi(u)}$, where 
\begin{center}
$\psi(u) := - \eta(iu) = bu +\int_0^\infty (1-e^{-uy}) \lambda(dy).$
\end{center}
Function $\psi$ is called the \textbf{Laplace symbol} or \textbf{Bernstein function} of the subordinator. The following is a theorem of Phillips.

\begin{theorem}[Phillips (1952) \cite{Phillips1952}]
Let $X = (X_t)_{t\geq0}$ be given by the prescription $X_t = Y(N_t)$. Then $X$ is a Feller process with semigroup $(T_t^X)_{t\geq0}$ and generator $(\mcl{A}^X, \mcl{D}(\mcl{A}^X))$, given by $$T_t^X f = \int_0^\infty (T_s f) \hspace{.1cm}\mu_{N_t}(ds), \hspace{1cm} \mcl{A}^X f = b\mcl{A} f + \int_0^\infty (T_s f - f) \lambda (ds).$$
\end{theorem}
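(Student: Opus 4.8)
The plan is to establish the three assertions in the stated order: the subordination formula for the semigroup, then the Feller property (so that $\mcl{A}^X$ is a well-defined uniform-limit generator), and finally the generator formula. First I would derive the semigroup identity by conditioning on $N_t$. Since $N$ is independent of $Y$, for $f\in C_0(\R^d)$,
\[
T_t^X f(x) = \E^x f(Y(N_t)) = \int_0^\infty \E^x[f(Y(s))]\,\mu_{N_t}(ds) = \int_0^\infty T_s f(x)\,\mu_{N_t}(ds).
\]
The semigroup identity $T_{t+r}^X = T_t^X T_r^X$ then follows from the semigroup property of $(T_s)_{s\ge0}$ together with the stationary, independent increments of $N$, which give the convolution relation $\mu_{N_{t+r}} = \mu_{N_t} * \mu_{N_r}$; a short Fubini computation closes this step.

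Next I would verify the Feller properties. I would read $T_t^X f = \int_0^\infty T_s f\,\mu_{N_t}(ds)$ as the Bochner integral of the bounded, norm-continuous $C_0(\R^d)$-valued map $s\mapsto T_s f$ against the probability measure $\mu_{N_t}$. Contraction and positivity are inherited pointwise from $(T_s)$, and $C_0$-preservation holds because $C_0(\R^d)$ is a closed subspace and a Bochner integral of a $C_0$-valued integrand lands in $C_0$. Strong continuity follows from the estimate
\[
\|T_t^X f - f\|_\infty \le \int_0^\infty \|T_s f - f\|_\infty\,\mu_{N_t}(ds) \le \epsilon + 2\|f\|_\infty\,\dP(N_t\ge\delta),
\]
after choosing $\delta$ so that $\|T_s f - f\|_\infty<\epsilon$ for $s<\delta$ and invoking the stochastic continuity of the subordinator, $N_t\to0$ in probability as $t\searrow0$. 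This makes $X$ a Feller process and gives meaning to $\mcl{A}^X$.

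For the generator I would work on the core $\mcl{D}(\mcl{A})$. For $f\in\mcl{D}(\mcl{A})$ the identity $T_s f - f = \int_0^s T_r\mcl{A}f\,dr$ yields $\|T_s f - f\|_\infty \le (1\wedge s)\,C_f$; since a subordinator's \Levy measure satisfies $\int_0^\infty(1\wedge s)\,\lambda(ds)<\infty$, the integral $\int_0^\infty(T_s f - f)\,\lambda(ds)$ converges in $C_0(\R^d)$ and the claimed expression is well defined. Writing $\Phi(s):=T_s f - f$ (so $\Phi(0)=0$ and $\Phi'(0^+)=\mcl{A}f$), the heart of the proof is
\[
\lim_{t\searrow0}\frac1t\big(T_t^X f - f\big) = \lim_{t\searrow0}\frac1t\int_0^\infty \Phi(s)\,\mu_{N_t}(ds) = b\,\mcl{A}f + \int_0^\infty \Phi(s)\,\lambda(ds),
\]
the right-hand side being exactly the generator of the subordinator $N$ applied to the curve $\Phi$ and evaluated at $s=0$. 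I would prove this by splitting at a level $\epsilon>0$: on $\{N_t>\epsilon\}$ the Banach-valued analogue of the \Levy small-time asymptotics \eqref{levysmalltime} recovers $\int_{(\epsilon,\infty)}\Phi(s)\,\lambda(ds)$, while on $\{N_t\le\epsilon\}$ I would substitute the first-order expansion $\Phi(s)=s\,\mcl{A}f+o(s)$ and use the subordinator estimate $\tfrac1t\,\E[N_t\mathbbm{1}_{N_t\le\epsilon}]\to b+\int_{(0,\epsilon]}s\,\lambda(ds)$ to recover $b\,\mcl{A}f+\int_{(0,\epsilon]}\Phi(s)\,\lambda(ds)$; letting $\epsilon\searrow0$ combines the pieces. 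Since $\mcl{D}(\mcl{A})$ is a core for $\mcl{A}^X$, the formula then extends.

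The hard part will be making this generator limit rigorous in the sup-norm required by a Feller generator, rather than merely pointwise, and in particular controlling the small-jump/drift regime near $s=0$ where the drift $b$ and the accumulation of small jumps both contribute to the limit. Justifying the Banach-valued small-time asymptotics and the interchange as $\epsilon\searrow0$ is the delicate point; here the linear bound $\|T_s f - f\|_\infty\le(1\wedge s)\,C_f$ available on $\mcl{D}(\mcl{A})$ is exactly what supplies a $\lambda$-integrable dominating function and forces the error terms to vanish uniformly. As a cleaner but heavier alternative, one can take the high road via Bochner's functional calculus, identifying $\mcl{A}^X=-\psi(-\mcl{A})$ for the Bernstein function $\psi$ and matching resolvents through Laplace transforms; I would mention this route but favor the direct small-time argument since it meshes with the small-time asymptotics already developed in the paper.
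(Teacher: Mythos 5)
The paper does not prove this statement at all: it is quoted verbatim as a theorem of Phillips with a citation to \cite{Phillips1952}, so there is no internal proof to compare against. Your outline is a correct and essentially standard direct proof of Phillips' theorem (the route one finds in Sato or Applebaum): conditioning on $N_t$ and the convolution identity $\mu_{N_{t+r}}=\mu_{N_t}*\mu_{N_r}$ for the semigroup, Bochner-integral and stochastic-continuity arguments for the Feller property, and the small-time behaviour of the subordinator together with the bound $\|T_sf-f\|_\infty\le(1\wedge s)\max(\|\mcl{A}f\|_\infty,2\|f\|_\infty)$ on the core $\mcl{D}(\mcl{A})$ for the generator; the alternative you mention via the Bernstein functional calculus $\mcl{A}^X=-\psi(-\mcl{A})$ is closer to Phillips' original operator-theoretic argument. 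The one technical point you should not gloss over is the tail at infinity when you invoke the Banach-valued small-time asymptotics on $\{N_t>\epsilon\}$: the curve $\Phi(s)=T_sf-f$ is bounded but does not vanish as $s\to\infty$, so the vague convergence $\tfrac1t\mu_{N_t}\to\lambda$ on $(\epsilon,\infty)$ must be supplemented by a bound on $\limsup_{t\searrow0}\tfrac1t\,\dP(N_t>M)$ that is uniform enough to let $M\to\infty$; this follows from standard maximal inequalities for \Levy processes (or is avoided entirely by the resolvent/Laplace-transform route), but it is an additional ingredient beyond the dominating function $(1\wedge s)C_f$, which only controls the region near $s=0$. With that caveat addressed, the argument is sound and, if anything, more informative than the paper, which delegates the entire proof to the reference.
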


\begin{claim}
If $Y$ is a stochastically monotone Feller process with semigroup $(T_t)_{t\geq0}$, i.e. $T_t f \in \mcl{F}_i$ for $f\in\mcl{F}_i$, and $N = (N_t)_{t\geq0}$ is a subordinator, then $X=(X_t)_{t\geq0}$ given by $X_t = Y(N_t)$ is a stochastically monotone Feller process.
\end{claim}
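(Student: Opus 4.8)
The plan is to invoke Phillips' theorem for the Feller property and then verify stochastic monotonicity directly from the explicit form of the subordinated semigroup. Phillips' theorem (stated just above) already guarantees that $X_t = Y(N_t)$ is a Feller process with semigroup $T_t^X f = \int_0^\infty (T_s f)\, \mu_{N_t}(ds)$, where $\mu_{N_t}$ is the law of $N_t$ on $[0,\infty)$. Thus the only thing left to establish is that $T_t^X$ maps $\mcl{F}_i$ into itself.

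First I would fix $f\in B_b(\R^d)\cap\mcl{F}_i$ and $t\geq 0$. Since $Y$ is stochastically monotone, for every fixed $s\geq 0$ the function $T_s f$ is non-decreasing in each component, i.e. $T_s f\in\mcl{F}_i$. Next I would take $x\leq y$ componentwise; then $T_s f(x)\leq T_s f(y)$ for every $s\geq 0$, and integrating this pointwise inequality against the nonnegative measure $\mu_{N_t}$ preserves it:
\begin{align*}
T_t^X f(x) = \int_0^\infty T_s f(x)\, \mu_{N_t}(ds) \leq \int_0^\infty T_s f(y)\, \mu_{N_t}(ds) = T_t^X f(y).
\end{align*}
Hence $T_t^X f\in\mcl{F}_i$, which is precisely stochastic monotonicity of $X$.

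There is no substantive obstacle here: the argument reduces to the elementary fact that a positively weighted mixture of non-decreasing functions is again non-decreasing, combined with the monotonicity hypothesis on $Y$. The only points requiring minor care are (i) that $\mu_{N_t}$ is a nonnegative (indeed probability) measure, since $N_t\geq 0$ almost surely, so no sign reversal can occur in the integration, and (ii) that the Feller property need not be re-derived, as it is furnished by Phillips' theorem. If one preferred to avoid citing Phillips for the monotonicity step, one could equivalently condition on $N_t$ and write $T_t^X f(x) = \E\!\left[\,T_{N_t} f(x)\,\right]$ via the independence of $N$ and $Y$, but the semigroup representation makes the preservation of monotonicity most transparent.
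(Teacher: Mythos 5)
Your proof is correct and follows essentially the same route as the paper: both rely on Phillips' theorem for the Feller property and then deduce monotonicity of $T_t^X f$ by integrating the pointwise inequality $T_s f(x)\leq T_s f(y)$ against the nonnegative measure $\mu_{N_t}$. No gaps.
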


\begin{proof}
We already know that $X$ is Feller with semigroup $(T_t^X)_{t\geq0}$. So choose $f\in\mcl{F}_i\cap C_b(\R^d)$. Then $T_s f \in\mcl{F}_i \cap C_b(\R^d)$ for all $s\geq0$. Choose $x<y$. Then $T_s f(x) \leq T_s f(y)$ for all $s\geq0$. Hence,
\begin{center}
$T_t^X f(x)  = \int_0^\infty (T_s f)(x) \hspace{.1cm} \mu_{N_t}(ds) \leq  \int_0^\infty (T_s f)(y) \hspace{.1cm} \mu_{N_t}(ds) = T_t^X f(y).$
\end{center}
Thus, $T_t^X f\in \mcl{F}_i$. 
\end{proof}




Let $Y$ have symbol $p(x,\xi)$. Then $X = Y(N)$ is a Feller process with symbol $p_X(x,\xi)$ that is given by 
$$p_X(x,\xi) = \psi(p(x,\xi)) + \text{ lower order perturbation}.$$
This ``perturbation" is  ``measured in a suitable scale of anisotropic function spaces" \cite[p.104]{Schilling2013}. 
\par Particularly interesting examples are when $N$ is an $\alpha$-stable subordinator, inverse Gaussian subordinator, and Gamma subordinator, and $Y$ is a  diffusion process $Y\sim (b(x), Q(x), 0)$.

\begin{example}
Let $Y$ be a stochastically monotone diffusion process in $\R^d$. This means $Y$ has \Levy characteristics $(b(x), Q(x),0)$. Mu-Fa Chen and Feng-yu Wang \cite{Chen1993} proved that such a process is stochastically monotone if and only if $q_{ij}(x)$ only depends on $x_i$ and $x_j$, and $b_i(x) \leq b_i(y)$ whenever $x\leq y$ with $x_i=y_i$. The generator of $Y$ is given by:
\begin{center}
$\mcl{A}^Y f(x) = b(x)\cdot \nabla f(x) + \frac{1}{2} \nabla \cdot Q(x) \nabla f(x)$
\end{center}
Let $N$ be $\alpha$-stable subordinator with \Levy characteristics $(0,\lambda)$, where $\lambda(dy) = \frac{\alpha}{\Gamma(1-\alpha)} \frac{1}{y^{1+\alpha}} dy.$ The generator $\mcl{A}^X$ of process $X = Y(N)$ looks like 
\begin{align*}
\mcl{A}^X f(x)  & = \int_0^\infty (T_s f(x) - f(x)) \lambda (ds) =  \int_0^\infty (T_s f(x) - f(x))\frac{\alpha}{\Gamma(1-\alpha)} \frac{1}{s^{1+\alpha}} ds.
\end{align*}

\end{example}

\noindent \textbf{Acknowledgements}: The author would like to thank Dr. Jan Rosinski for his helpful advice and guidance regarding the ideas of this paper.

\section*{Appendix}
This appendix contains proofs of some lemmas from Section \ref{sec:mainresults}. Throughout this appendix, we assume Setting \ref{set1}.

\begin{lemmaA}[Schilling (1998) \cite{SchillingCb}, Thm.4.3]
\label{cbfeller}
Assume  $p(x,\xi)$ is bounded. If $x\mapsto p(x,0)$ is continuous, then $(T_t)_{t\geq0}$ extends to a \textbf{$C_b$-Feller semigroup}, i.e.  satisfies
\begin{enumerate}[noitemsep, label = (\alph*)]
\item $T_t: C_b(\R^d) \rightarrow C_b(\R^d)$,
\item $\lim_{h\searrow0} ||T_{t+h} u - T_t u||_{\infty,K} = 0$ for all $K\subset\R^d$ compact, $u\in C_b(\R^d)$, $t\geq0$, where $||u||_{\infty,K} := \sup_{y\in K}| u(y) |$, i.e. locally uniformly continuous.
\end{enumerate}

\end{lemmaA}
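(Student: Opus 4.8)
The plan is to realise $(T_t)$ as the expectation semigroup $T_t u(x)=\E^x u(X_t)$, which is automatically well defined on all of $B_b(\R^d)$, and then to show that this extension preserves $C_b(\R^d)$ and is locally uniformly continuous in $t$. The whole argument rests on upgrading the vague continuity of the sub-probability kernels $\mu_t^x:=\dP^x\circ X_t^{-1}$ (which comes for free from the Feller property $T_t\colon C_0\to C_0$) to genuine weak continuity, and the two standing hypotheses are exactly what make this upgrade possible.

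First I would record that, by the hypothesis $T_t\colon C_0(\R^d)\to C_0(\R^d)$, whenever $x_n\to x$ we have $\int\varphi\,d\mu_t^{x_n}\to\int\varphi\,d\mu_t^{x}$ for every $\varphi\in C_0(\R^d)$, i.e. $\mu_t^{x_n}\to\mu_t^{x}$ vaguely. The only defect between vague and weak convergence is mass escaping to infinity, so it suffices to control the total masses $\mu_t^x(\R^d)=T_t\mathbbm{1}(x)$. This is where continuity of $x\mapsto p(x,0)$ enters: since $-p(x,0)=a(x)$ is the killing rate of the process (the sole surviving term of the L\'evy--Khintchine representation \eqref{symbol} at $\xi=0$), the lost mass $1-T_t\mathbbm{1}(x)$ is governed by $a$; approximating $\chi_n\uparrow\mathbbm{1}$ with $\chi_n\in C_c^\infty$ gives $T_t\chi_n\in C_0$ with $T_t\chi_n\uparrow T_t\mathbbm{1}$, and the assumed continuity of $a$ forces the increasing limit $T_t\mathbbm{1}$ to be continuous (in the conservative case $a\equiv0$ this is trivial, $T_t\mathbbm{1}\equiv1$). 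A standard fact then upgrades vague convergence with convergent total masses to weak convergence, so $\mu_t^{x_n}\to\mu_t^{x}$ weakly, and the portmanteau theorem yields $T_tu(x_n)\to T_tu(x)$ for every $u\in C_b(\R^d)$. Together with $\|T_tu\|_\infty\le\|u\|_\infty$ this is claim (a).

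For claim (b) I would first promote the escaping-mass control to uniform tightness: by the standard maximal estimate for rich Feller processes (available because the symbol is bounded),
\begin{equation*}
\sup_{x\in K}\dP^x\bigl(|X_t-x|>R\bigr)\ \le\ c\,t\,\sup_{|y-x|\le R}\ \sup_{|\xi|\le 1/R}|p(y,\xi)|,
\end{equation*}
which, combined with the global bound \eqref{locbdd} taken over $K=\R^d$, gives $\lim_{R\to\infty}\sup_{x\in K}\mu_t^x(\{|y|>R\})=0$ for each compact $K$ and each fixed $t$. I would then use the Markov property to write $T_{t+h}u-T_tu=T_t(T_hu-u)$ and set $v_h:=T_hu-u$, so that $\|v_h\|_\infty\le2\|u\|_\infty$ and $v_h\to0$ locally uniformly as $h\searrow0$ (the latter since $\mu_h^x\to\delta_x$ weakly, uniformly on compacts, by the same tightness). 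Splitting
\begin{equation*}
\sup_{x\in K}\bigl|\E^x v_h(X_t)\bigr|\ \le\ \sup_{|y|\le R}|v_h(y)|\ +\ 2\|u\|_\infty\,\sup_{x\in K}\dP^x(|X_t|>R),
\end{equation*}
I would choose $R$ large to kill the second term by tightness, then $h$ small to kill the first by local uniform convergence of $v_h$, which establishes (b).

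The main obstacle is precisely the passage from vague to weak continuity, made uniform over compact sets of starting points. This is where boundedness of the symbol (via the maximal inequality, controlling leakage to infinity) and continuity of $x\mapsto p(x,0)$ (via continuity of the total mass $T_t\mathbbm{1}$, controlling the killing) are both indispensable; if either fails, mass can escape to infinity or be killed discontinuously and the extension to $C_b$ ceases to be continuous.
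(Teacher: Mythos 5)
The paper itself does not prove this lemma; it defers entirely to Schilling's Theorem 4.3, so the relevant comparison is with that cited proof. Your overall architecture is the right one and matches Schilling's: extend $T_t$ to $B_b$ via the kernels $\mu_t^x$, upgrade vague to weak continuity by controlling total mass, identify $-p(x,0)$ as the killing term whose continuity governs the continuity of $T_t\mathbbm{1}$, and reduce (b) to $T_{t+h}u-T_tu=T_t(T_hu-u)$ plus tightness. But there are two genuine gaps. First, the tightness step as you justify it fails: plugging the global bound $\sup_y|p(y,\xi)|\le c(1+|\xi|^2)$ into the maximal inequality with $|\xi|\le 1/R$ gives a bound of order $c\,t\,(1+R^{-2})$, which does \emph{not} tend to $0$ as $R\to\infty$. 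What you would need is $\lim_{R\to\infty}\sup_y\sup_{|\xi|\le 1/R}|p(y,\xi)|=0$, i.e.\ equicontinuity of the symbols at $\xi=0$ uniformly in the state variable, and boundedness of the symbol does not imply this (e.g.\ $p(x,\xi)=1-\cos(\xi\cdot h(x))$ with $h$ unbounded is bounded by $2$ but has no uniform decay near $\xi=0$). The locally uniform tightness you want is still true, but it should be obtained differently: take $\varphi_R\in C_c$, $0\le\varphi_R\uparrow\mathbbm{1}$, note $T_t\varphi_R\in C_0$ increases pointwise to $T_t\mathbbm{1}$, and apply Dini's theorem on compacts once $T_t\mathbbm{1}$ is known to be continuous.

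That brings up the second gap: the continuity of $T_t\mathbbm{1}$ is asserted ("the assumed continuity of $a$ forces the increasing limit to be continuous") rather than proved. An increasing pointwise limit of continuous functions need not be continuous, so this sentence hides the entire content of the lemma: establishing that $T_t\varphi_R\to T_t\mathbbm{1}$ \emph{locally uniformly} is exactly where the hypothesis on $x\mapsto p(x,0)$ does its work (in Schilling's paper this is a separate theorem, proved via the Dynkin/martingale formula applied to the cutoffs $\varphi_R$, which produces an explicit error bound in terms of $\sup_{x\in K}|p(x,0)|$-type quantities and the local boundedness of the symbol). A similar caveat applies to your treatment of (b): you need the tightness $\sup_{x\in K}\sup_{h\le 1}\mu_h^x(\{|y-x|>R\})\to0$ uniformly in the small time parameter $h$ as well, to conclude $T_hu-u\to0$ locally uniformly for $u\in C_b$; this again requires either the (correctly justified) maximal inequality over $\sup_{s\le h}$ or joint continuity of $(t,x)\mapsto T_t\mathbbm{1}(x)$, not just the fixed-$t$ statement.
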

\begin{proof}
For proof, see \cite[p.247]{SchillingCb}.
\end{proof}

\subsection*{Proof of Lemma \ref{integrogenerates}}

\begin{proof}
The process 
\begin{equation*}
M_t^f := f(X_t) - f(X_0) - \int_0^t I(p) f(X_{s-}) ds
\end{equation*}
is, for every $f\in C_b^2(\R^d)$, a martingale with respect to $\dP^x$, for all $x$ (see Schilling \cite[Lemma 3.2, p.579]{Schilling1998}). This implies 
\begin{align*}
0 
 & = \E^x f(X_t) - \E^x f(X_0) - \E^x \int_0^t I(p) f(X_{s-}) ds
\\ & = T_t f(x) - f(x) - \int_0^t \E^x I(p) f(X_{s-}) ds
\\ & = T_t f(x) - f(x) - \int_0^t T_s I(p) f(x) ds
\end{align*}
for every $x\in\R^d$, $t\geq0$. Note that we can switch integrals in line 2 because $I(p)f\in C_b(\R^d)$ by Remark 4.5(ii) in Schilling \cite{SchillingCb}. This implies 
\begin{equation*}
\label{boch1}
\frac{1}{t}(T_t f - f) = \frac{1}{t} \int_0^t T_s I(p) f \hspace{.1cm} ds.
\end{equation*}
 We argue that when taking the limit as $t\searrow0$, the right hand-side converges locally uniformly to $I(p) f$. 
Note that since $I(p)f\in C_b(R^d)$, then $(T_s I(p) f) \mathbbm{1}_K$ is continuous in $s$ for every compact set $K$ by the $C_b$-Feller property, i.e. 
\begin{align*}
||(T_{s+h} I(p) f)\mathbbm{1}_K - (T_{s} I(p) f)\mathbbm{1}_K||_\infty 
 & = \sup_{x\in K} |T_{s+h} I(p) f(x) - T_{s} I(p) f(x)| \rightarrow0
\end{align*}
So, the function $T_{(\LargerCdot)} I(p) f \mathbbm{1}_K$ is the integrand of a Bochner-type integral that is continuous in $s$ and integrable on any closed interval $[a,b]$. Therefore, by Fundamental Theorem of Calculus for Bochner integrals \cite[p.21-22]{Dynkin1965}, 
\begin{align*}
\lim_{t\searrow0}\frac{1}{t}(T_t f - f)\mathbbm{1}_K & = \lim_{t\searrow0} \frac{1}{t} \int_0^t (T_s I(p) f) \mathbbm{1}_K\hspace{.1cm} ds  = (I(p) f)\mathbbm{1}_K
\end{align*}
for all $K\subset\R^d$ compact. Hence, $I(p)f = \lim_{t\searrow0}\frac{1}{t}(T_t f - f)$, where convergence is locally uniform.
\end{proof}

\subsection*{Proof of Lemma \ref{integroderivative}}

\begin{proof}
By Lemma A.\ref{cbfeller}, our semigroup $(T_t)_{t\geq0}$ satisfies the $C_b$-Feller property. Choose $f\in C_b^2(\R^d)$. 
Observe that for all $x\in\R^d$, 
\begin{align*}
T_{t+h}f(x) -T_tf(x) & = T_t (T_hf(x) - f(x)) = T_t \int_0^h T_s I(p) f(x) \,ds
\\ & = \E^x \int_0^h T_s I(p) f(X_t) \,ds =  \int_0^h \E^x T_s I(p) f(X_t) \,ds, \text{ by Fubini's Theorem},
\\ & =  \int_0^h T_t T_s I(p) f(x) \,ds = \int_0^h T_s T_t I(p) f(x)\,ds.
\end{align*}
Thus,
\begin{align*}
\lim_{h\rightarrow0}\frac{1}{h}(T_{t+h}f -T_tf) & = \lim_{h\rightarrow0}\frac{1}{h} \int_0^h T_s T_t I(p) f \hspace{.1cm}ds = T_t I(p) f
\end{align*}
because $T_t I(p) f\in C_b(\R^d)$ by $C_b$-Feller property, thus making $T_s T_t I(p) f \mathbbm{1}_K$  continuous in $s$ for every compact $K$. Once again, by Fundamental Theorem of Calculus for Bochner integrals (see \cite[p.21-22]{Dynkin1965}), we get the convergence shown above.
\par
 Finally, we want to show $I(p) T_t f = T_t I(p) f$.  Choosing $(\phi_n)_{n\in \N}\subset C_c^\infty(\R^d)$ such that $\mathbbm{1}_{B(0,n)} \leq \phi_n \leq 1$ for all $n$. Hence, $f\phi_n \in C_c^2(\R^d) \subset \mcl{D}(\mcl{A})$, the domain of generator $\mcl{A}$, and we have $I(p) T_t f \phi_n = T_t I(p) f\phi_n$. By an approximation argument, we get our desired result.
\end{proof}

\subsection*{Proof of Lemma \ref{extcauchy}}

\begin{proof}
Observe that all limits (and corresponding derivatives) we take here are with respect to locally uniform convergence. Note that by the assumption $x\mapsto p(x,0)$ is continuous, our semigroup $(T_t)_{t\geq0}$ satisfies the $C_b$-Feller property by Lemma A.\ref{cbfeller}. Also, by Lemma \ref{integrogenerates}, we have $\lim_{t\searrow0} \frac{1}{t}(T_t u - u) = I(p)u$ for all $u\in C_b^2(\R^d)$. Observe that we will define the derivative $F'(s)$ by $F'(s) = \lim_{h\rightarrow0} \frac{F(s+h)-F(s)}{h}$ where the limit is under locally uniform convergence. Also, our statement of (b) is different then Liggett's. 
\begin{center}
Liggett's: if $t_n\rightarrow t$, then $||G(t_n) - G(t)||_\infty \rightarrow0$ as $n\rightarrow\infty$.
\vskip.5cm
Ours: if $t_n\rightarrow t$, then $||G(t_n) - G(t)||_{\infty,K} \rightarrow0$ as $n\rightarrow\infty$ for all $K$ compact.
\end{center}
Though Liggett's assumption would be sufficient, we don't need something that strong in our setting, and our $G$ will satisfy locally uniform continuity. Choose a compact set $K\subset\R^d$.
\begin{align*}
& \frac{T_{t-s-h}F(s+h) - T_{t-s}F(s)}{h}  \cdot \mathbbm{1}_K
\\ & = \frac{T_{t-s-h}F(s+h)}{h}   \cdot \mathbbm{1}_K - \frac{T_{t-s}F(s)}{h}  \cdot \mathbbm{1}_K
\\ & + [T_{t-s-h} - T_{t-s}]F'(s)  \cdot \mathbbm{1}_K -  [T_{t-s-h} - T_{t-s}]F'(s)  \cdot \mathbbm{1}_K
\\ & + \frac{T_{t-s-h}F(s)}{h}   \cdot \mathbbm{1}_K- \frac{T_{t-s-h}F(s)}{h}  \cdot \mathbbm{1}_K
\\ & + \frac{T_{t-s}F(s+h)}{h}  \cdot \mathbbm{1}_K - \frac{T_{t-s}F(s+h)}{h} \cdot \mathbbm{1}_K
\\ & + \frac{T_{t-s}F(s)}{h}  \cdot \mathbbm{1}_K - \frac{T_{t-s}F(s)}{h}  \cdot \mathbbm{1}_K
\\ & =: (1) + (2) + (3) + (4) + (5) + (6) + (7) + (8) + (9) + (10)
\\ & = \textcolor{red}{[(2)+(7)]} + \textcolor{blue}{[(5)+(10)]} + \textcolor{ForestGreen}{[(3)]} + \textcolor{Plum}{[ (4) + (1) + (9) + (8) + (6)]}
\\ & = \textcolor{red}{T_{t-s}\left[ \frac{F(s+h) - F(s)}{h}\right]  \cdot \mathbbm{1}_K }+ \textcolor{blue}{\left[ \frac{T_{t-s-h}-T_{t-s}}{h} \right]F(s)  \cdot \mathbbm{1}_K}
\\ & + \textcolor{ForestGreen}{[T_{t-s-h} - T_{t-s}]F'(s)  \cdot \mathbbm{1}_K}  + \textcolor{Plum}{[T_{t-s-h}-T_{t-s}]\left[ \frac{F(s+h) - F(s)}{h} - F'(s)\right]  \cdot \mathbbm{1}_K}
\\ & =: \textcolor{red}{(I)} + \textcolor{blue}{(II)} + \textcolor{ForestGreen}{(III)} + \textcolor{Plum}{(IV)}
\end{align*}
Now we consider the limits as $h$ goes to $0$ for each of these four terms.
\begin{align*}
(I): \lim_{h\searrow0} T_{t-s} \left[\frac{F(s+h)- F(s)}{h} \right]  \cdot \mathbbm{1}_K& = T_{t-s} \lim_{h\searrow0} \left[\frac{F(s+h)- F(s)}{h} \right]  \cdot \mathbbm{1}_K  = T_{t-s} F'(s)  \cdot \mathbbm{1}_K
\end{align*}
because $T_{t-s}$ is a bounded operator, which means it is a continuous operator. \mypar 

\noindent (II): Let $u = t-s$. Then $s = t-u$ and $ds = -du$. For a function $f\in C_b(\R^d)$, 
\begin{align*}
\lim_{h\searrow0} \left[ \frac{T_{t-s-h}-T_{t-s}}{h} \right]f  \cdot \mathbbm{1}_K = \frac{d}{ds} T_{t-s} f  \cdot \mathbbm{1}_K = -\frac{d}{du} T_{u} f  \cdot \mathbbm{1}_K &= -I(p)T_{u} f  \cdot \mathbbm{1}_K \\ & = -I(p) T_{t-s} f  \cdot \mathbbm{1}_K.
\end{align*}
Therefore, $\displaystyle\lim_{h\searrow0} \left[ \frac{T_{t-s-h}-T_{t-s}}{h} \right]F(s)  \cdot \mathbbm{1}_K=  -I(p) T_{t-s} F(s) \cdot \mathbbm{1}_K = - T_{t-s} I(p) F(s)  \cdot \mathbbm{1}_K.$
\\ \\
(III):
By $C_b$-Feller property, since $F'(s)\in C_b(\R^d)$, $\displaystyle\lim_{h\searrow0} [T_{t-s-h} -T_{t-s}]F'(s)  \cdot \mathbbm{1}_K  = 0$ uniformly.
\\ \\
(IV): Observe that $T_{t-s-h}$ and $T_{t-s}$ are both contractions. Hence, 
\begin{align*}
& \left| \left| [T_{t-s-h}-T_{t-s}]\left[ \frac{F(s+h) - F(s)}{h} - F'(s)\right] \right| \right|_{\infty,K}
\\ & \leq \left| \left| T_{t-s-h}-T_{t-s} \right| \right|\cdot  \left| \left| \left[ \frac{F(s+h) - F(s)}{h} - F'(s)\right] \right| \right|_{\infty,K}
\\ & \leq 2\left| \left| \left[ \frac{F(s+h) - F(s)}{h} - F'(s)\right] \right| \right|_{\infty,K}\longrightarrow0
\end{align*}
as $h\rightarrow0$. Thus, we have for $0<s<t$,
\begin{align*}
\frac{d}{ds} & T_{t-s} F(s)  \cdot \mathbbm{1}_K  = \lim_{h\searrow0} \frac{T_{t-(s+h)} F(s+h) - T_{t-s}F(s)}{h}  \cdot \mathbbm{1}_K
\\ & = \lim_{h\searrow0} [(I)+(II)+(III)+(IV)] = T_{t-s} F'(s)  \cdot \mathbbm{1}_K - T_{t-s} I(p) F(s) \cdot \mathbbm{1}_K
\\ & = T_{t-s}[F'(s) - I(p) F(s)]  \cdot \mathbbm{1}_K \stackrel{(c)} = T_{t-s} G(s)  \cdot \mathbbm{1}_K.
\end{align*}
The right-hand side is a continuous function of $s$ because $G$ is continuous function of $s$ and the semigroup is uniformly continuous on $K$ by $C_b$-Feller property.  Let's justify this:
\\ \\
\underline{Aside}: Let $\epsilon>0$. Then $\exists N$ large s.t. $||G(s_n) - G(s)||_{\infty,K}<\epsilon/2$ for all $n\geq N$. Also, $\exists N'$ large s.t. 
$||T_{t-{s_n}} G(s_{N}) - T_{t-s} G(s_{N})||_{\infty,K}  = ||(T_{t-s_n}-T_{t-s}) G(s_{N}) ||_{\infty,K} <\epsilon/2$
for all $n\geq N'$ since semigroup operator is uniformly continuous on compact sets. Let $M=\max(N,N')$.  
\begin{align*}
||T_{t-s_M} G(s_M) - T_{t-s} G(s)||_{\infty,K} & = ||T_{t-s_M} G(s_M) - T_{t-s} G(s_M) +T_{t-s} G(s_M) - T_{t-s} G(s)||_{\infty,K}
\\ & \leq ||T_{t-s_M} G(s_M) - T_{t-s} G(s_M)||_{\infty,K} + ||T_{t-s} G(s_M) - T_{t-s} G(s)||_{\infty,K}
\\ & \leq ||T_{t-s_M} G(s_M) - T_{t-s} G(s_M)||_{\infty,K} + || G(s_M) - G(s)||_{\infty,K}
\\ & < \epsilon/2 + \epsilon/2 = \epsilon.
\end{align*}

Therefore we can integrate these functions with respect to $s$ from $0$ to $t$. And by Fundamental Theorem of Calculus for Bochner integrals (see \cite[p.21-22]{Dynkin1965}),
\begin{align*}
\int_0^t T_{t-s} G(s) ds \hspace{-.05cm}  \cdot \hspace{-.075cm} \mathbbm{1}_K  = \int_0^t \frac{d}{ds} T_{t-s} F(s) ds   \hspace{-.05cm}  \cdot \hspace{-.075cm} \mathbbm{1}_K  = (T_{t-t}F(t) - T_t F(0))   \mathbbm{1}_K =(F(t)  - T_t F(0))  \mathbbm{1}_K.
\end{align*}

\noindent Since $K$ compact is  arbitrary, we have our desired result: $F(t) = T_t F(0) + \int_0^t T_{t-s} G(s)ds$. 
\end{proof}

\newpage

\bibliographystyle{acm}

\bibliography{references-paper1_tu}

\end{document}